\documentclass{amsart}
\usepackage{amsmath,amssymb,amsthm,eucal}
\newcommand{\ds}{\displaystyle}

\begin{document}

\title{Almost Everywhere Convergence Of Convolution Measures}
\author{Karin Reinhold \\ Anna K. Savvopoulou \\  Christopher M. Wedrychowicz} 
\address{Department of Mathematics\\
         University at Albany, SUNY\\ 
         Albany, NY 12222}
         \address{Department of Mathematical Sciences \\
         Indiana University in South Bend, \\
         South Bend, IN, 46545}
         \address{Department of Mathematical Sciences \\
         Indiana University in South Bend, \\
         South Bend, IN, 46545}
         
\email{reinhold@albany.edu \\ annsavvo@iusb.edu \\ cwedrych@iusb.edu}
\maketitle

\begin{abstract}
Let $(X,\mathcal{B},m,\tau)$ be a dynamical system with $\ds (X,\mathcal{B},m)$ a probability space and $\ds \tau$ an invertible, measure preserving transformation. The present paper deals with the almost everywhere convergence in $\ds\mbox{L}^1(X)$ of a sequence of operators of weighted averages. Almost everywhere convergence follows once we obtain an appropriate maximal estimate and once we provide a dense class where convergence holds almost everywhere. The weights are given by convolution products of members of a sequence of probability measures $\ds\{\nu_i\}$ defined on $\ds\mathbb{Z}$. We then exhibit cases of such averages, where convergence fails. 
\end{abstract}

\section{Introduction}


\subsection{Preliminaries}
Let $\ds (X,\mathcal{B},m)$ be a non-atomic, separable probability space. Let $\ds\tau$ be an invertible, measure preserving
transformation of $\ds (X,\mathcal{B},m)$. Given a probability measure $\ds\mu$ defined on $\ds\mathbb{Z}$ , one can define
the operator $\ds \mu f(x)=\sum_{k\in\mathbb{Z}}\mu(k)f(\tau^k x)$ for $\ds x\in X$ and $\ds f\in \mbox{L}^p(X)$ where $\ds p\geq 1$.
Note that this operator is well defined for almost every $x\in X$ and that it is a positive contraction in all $\ds \mbox{L}^p(X)$ for $\ds p\geq 1$,
i.e $\ds \|\mu f\|_{p}\leq \|f\|_p$. 
 
Given a sequence of probability measures, $\ds\{\mu_n\}$, defined on $\ds\mathbb{Z}$, one can subsequently define a sequence of operators
as follows, $\ds\mu_n f(x)=\sum_{k\in\mathbb{Z}}\mu_n(k)f(\tau^k x)$. The case where the weights are induced by the convolution powers
of a single probability measure defined on $\ds\mathbb{Z}$ has already been studied. More specifically, given $\ds\mu$ a probability
measure on $\ds\mathbb{Z}$, let $\ds\mu^n$ denote the $\ds n^{\mbox{th}}$ convolution power of $\ds\mu$, which 
is defined inductively as $\ds \mu^n=\mu^{n-1}\ast\mu$ where $\ds \mu^2(k)=(\mu\ast\mu)(k)=\sum_{j\in\mathbb{Z}}\mu(k-j)\mu(j)$ for all $\ds k\in\mathbb{Z}$. In ~\cite{cald} and 
~\cite{belrjblatt} the authors studied the sufficient conditions on $\ds\mu$ that give $\ds \mbox{L}^p$, ($p\geq 1$), convergence 
of the sequence of operators of the form 
$$\ds \mu_n f(x)=\sum_{k\in\mathbb{Z}}\mu^n(k)f(\tau^k x).$$
 
The type of weighted averages that will be considered in this paper, are averages whose weights are induced by the convolution product of members of a sequence of probability measures $\ds\{\nu_i\}$ defined on $\ds\mathbb{Z}$. Given this sequence of probability measures $\ds\{\nu_i\}$,
we define another sequence of probability measures $\ds\{\mu_n\}$ on $\ds\mathbb{Z}$ in the following way,
\begin{eqnarray*}
\mu_1 & = & \nu_1\\
\mu_2 & = & \nu_1\ast\nu_2 \\
& \vdots & \\
\mu_n & = & \nu_1\ast\cdots\ast\nu_n
\end{eqnarray*}

We then define the following sequence of operators
$$\mu_n f(x)=\sum_{k\in\mathbb{Z}}(\nu_1\ast\cdots\ast\nu_n)(k)f(\tau^k x)=\sum_{k\in\mathbb{Z}}\mu_n(k)f(\tau^k x).$$

Note that these operators $\ds \mu_nf(x)$ are well defined for almost every $\ds x\in X$ and that they are positive contractions in all $\ds\mbox{L}^p(X)$, for $\ds 1\leq p\leq\infty$.

If one defines $\ds T_mf(x)=\sum_{k\in\mathbb{Z}}\nu_m(k)f(\tau^kx)$ we may view $\ds\mu_nf(x)=\nu_1\ast\cdots\nu_n f(x)$ as the composition of $T_1,T_2,\ldots,T_n$ i.e $\ds \mu_nf(x)=T_n\cdots T_1f(x)$. Therefore the almost everywhere convergence of $\ds\mu_nf(x)$ may be viewed as a special case of the almost everywhere convergence of the sequence $\ds S_nf(x)=T_n\cdots T_1f(x)$ where the $\ds T_i$'s are positive contractions of $\ds L^p$ $\ds\forall p\geq 1$. If one defines $\ds S_nf(x)=T_1^{\ast}\cdots T_n^{\ast} T_n\cdots T_1 f(x)$, where $\ds T_i^{\ast}$ denotes the adjoint of $T_i$ one encounters a much studied situation. In our case this would correspond to successive convolution
of $\nu_i$ and $\ds \tilde{\nu}_i$ where $\tilde{\nu}_i$ is defined by $\ds \tilde{\nu}_i(k)=\nu_i(-k)$. When $\ds f\in L^p$ for $\ds 1<p<\infty$ and the $T_i$'s are positive contractions and $\ds T_n1=T_n^{\ast}1=1$ Rota in \cite{rota} established the almost everywhere convergence. In \cite{akcoglu} Akcoglu extended this result to the situation where the $\ds T_i$'s are not necessarily positive. Concerning
$p=1$ Ornstein (\cite{ornstein}) constructed an example of a self-adjoint operator $T$ 
satisfying the above for which $\ds T\cdots Tf(x)=T^nf(x)$ fails to converge
almost everywhere.

The above failure when $p=1$ is in contrast to the almost everywhere convergence of the Cesaro averages $\ds \frac{1}{n}\sum_{k=1}^n T^kf(x)$, (see \cite{peter})


 
 \subsection{Definitions and Past Results}
Before we mention a few of the results regarding weighted averages with convolution powers, some 
definitions are essential.  
 \theoremstyle{definition} 
 \newtheorem{strap}{Definition}[subsection]
 \begin{strap}
 A probability measure $\ds\mu$ defined on a group $G$ is called \textbf{strictly aperiodic} if and only if 
 the support of $\ds\mu$ can not be contained in a proper left coset of $\ds G$.
 \end{strap}
 
\noindent A key theorem by Foguel that we will use repeatedly is the following,

\theoremstyle{plain}
 
\newtheorem{foguel}[strap]{Theorem}
\begin{foguel}[\cite{caldzyg}]
If $\ds G$ is an abelian group and $\ds\hat{G}$ denotes the character group of the group $G$, then the following are equivalent for a probability measure $\ds\mu$,
\begin{enumerate}
\item $\ds\mu$ is strictly aperiodic
\item If $\ds\gamma\neq 1$, $\ds\gamma\in \hat{G}$, then $\ds|\hat{\mu}(\gamma)|<1$ 
\end{enumerate}
\end{foguel}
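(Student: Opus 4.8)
The plan is to reduce the whole equivalence to a single elementary observation about when the inequality $|\hat{\mu}(\gamma)| \le 1$ is saturated. Since every character satisfies $|\gamma(x)| = 1$ and $\mu$ is a probability measure, the estimate $|\hat{\mu}(\gamma)| = |\int_G \gamma \, d\mu| \le \int_G |\gamma| \, d\mu = 1$ is immediate, so the entire content of the theorem lies in identifying the equality case $|\hat{\mu}(\gamma)| = 1$. I would first prove the clean lemma that, for $\gamma \ne 1$, one has $|\hat{\mu}(\gamma)| = 1$ if and only if $\gamma$ is constant on the support of $\mu$, and then translate ``constant on the support'' into ``the support sits in a proper coset.'' Both implications of the theorem then fall out by taking contrapositives.

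For the equality case, I would write $\hat{\mu}(\gamma) = r e^{i\theta}$ with $r = |\hat{\mu}(\gamma)|$ and factor out the phase, $r = \int_G e^{-i\theta} \gamma(x) \, d\mu(x)$. Taking real parts gives $r = \int_G \mathrm{Re}\big(e^{-i\theta}\gamma(x)\big)\, d\mu(x)$, where the integrand is bounded above by $1$ and $\mu$ has total mass $1$. Hence $r = 1$ forces $\mathrm{Re}(e^{-i\theta}\gamma) = 1$ $\mu$-almost everywhere, i.e. $\gamma(x) = e^{i\theta}$ for $\mu$-a.e. $x$; on the discrete group of the paper this is exactly the assertion that $\gamma$ is constant on $\mathrm{supp}(\mu) = \{k : \mu(k) > 0\}$. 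Conversely, if $\gamma \equiv c$ on the support then $\hat{\mu}(\gamma) = c$ and $|\hat{\mu}(\gamma)| = 1$ trivially.

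It remains to connect constancy of a character on the support to the coset condition, and here the two directions use the two faces of duality. If $\gamma \ne 1$ is constant on $\mathrm{supp}(\mu)$, fix any $x_0$ in the support; then $\gamma(x x_0^{-1}) = 1$ for every $x$ in the support, so $\mathrm{supp}(\mu) \subseteq x_0 \ker\gamma$, and since $\gamma \ne 1$ its kernel $H = \ker\gamma$ is a proper subgroup --- thus $\mu$ fails to be strictly aperiodic. This proves $(1) \Rightarrow (2)$ in contrapositive form. For the reverse, suppose $\mu$ is not strictly aperiodic, so $\mathrm{supp}(\mu) \subseteq gH$ for some proper (closed) subgroup $H$; I would invoke Pontryagin duality to produce a nontrivial character $\gamma$ in the annihilator $H^\perp$, which is nonzero precisely because $H$ is proper. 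Such a $\gamma$ equals the constant $\gamma(g)$ on all of $gH$, hence on the support, so the lemma gives $|\hat{\mu}(\gamma)| = 1$ and $(2)$ fails.

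The routine measure-theoretic equality argument is not where the difficulty lies; the essential input is the duality fact that a proper closed subgroup of an abelian locally compact group carries a nontrivial annihilating character --- equivalently, that the nontrivial quotient $G/H$ has nontrivial characters separating its points. For the concrete groups used in the paper this is completely explicit: the proper subgroups of $\mathbb{Z}$ are the $n\mathbb{Z}$ with $n \ge 2$, and the characters $k \mapsto e^{2\pi i k/n}$ annihilate $n\mathbb{Z}$ while being nontrivial, so no appeal to the general duality machinery is actually needed in that setting. A minor point to keep honest is the distinction between ``$\mu$-almost everywhere'' and ``on the support,'' which is automatic for discrete $G$ but should be phrased via the closed support in the general statement.
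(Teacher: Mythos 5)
The paper gives no proof of this statement; it is quoted as a known theorem of Foguel with a citation to \cite{caldzyg}, so there is no internal argument to compare yours against. Judged on its own, your proof is correct and is the standard one: the reduction to the equality case $|\hat{\mu}(\gamma)|=1$, the real-part argument showing equality forces $\gamma$ to be constant $\mu$-almost everywhere, and the two-way translation between ``$\gamma$ constant on $\mathrm{supp}(\mu)$'' and ``$\mathrm{supp}(\mu)$ lies in a coset of the proper subgroup $\ker\gamma$'' (one direction via the kernel, the other via a nontrivial character annihilating the subgroup) together give both implications. Your closing caveat is the right one to flag: for the equivalence to hold in a general locally compact abelian group, ``proper coset'' must mean a coset of a proper \emph{closed} subgroup, since a proper dense subgroup has trivial annihilator (e.g.\ a measure on $\mathbb{R}$ supported on $\{0,1,\sqrt{2}\}$ sits inside the proper subgroup $\mathbb{Z}+\sqrt{2}\,\mathbb{Z}$ yet has $|\hat{\mu}(\gamma)|<1$ for all $\gamma\neq 1$). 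In the only setting the paper actually uses, $G=\mathbb{Z}$, every subgroup is closed and of the form $n\mathbb{Z}$, the annihilating characters are the explicit $k\mapsto e^{2\pi i k/n}$, and your argument goes through with no duality machinery, so the proof is complete for the paper's purposes.
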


\theoremstyle{plain}



\noindent A few more definitions,


\theoremstyle{definition}
\newtheorem{expmom}[strap]{Definition}
\begin{expmom}
If $\ds p>0$, the \textbf{$\ds \mathbf{p^{\mbox{th}}}$ moment} of $\mu$ is given by $\ds\sum_{k\in\mathbb{Z}}|k|^p\mu(k)$ and it is
denoted by $\ds m_p(\mu)$. The \textbf{expectation} of $\mu$ is $\ds\sum_{k\in\mathbb{Z}}k\mu(k)$ and is denoted by
$\ds E(\mu)$. 
\end{expmom}


\theoremstyle{plain}




\noindent In \cite{cald} Bellow and Calder\'on proved,

\newtheorem{caldbelweak}[strap]{Theorem}
\begin{caldbelweak}
Let $\ds\mu$ be a strictly aperiodic probability measure defined on $\ds\mathbb{Z}$
that has expectation $0$ and finite second moment. The sequence of operators 
$$\mu_n f(x)=\sum_{k\in\mathbb{Z}}\mu^n(k)f(\tau^k x)$$ converges almost 
everywhere for $\ds f\in\mbox{L}^1(X)$.
\end{caldbelweak}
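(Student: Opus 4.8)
The standard route to almost everywhere convergence of a sequence of $L^1$ contractions is the Banach principle: it suffices to establish two ingredients, namely (i) a weak-type $(1,1)$ maximal inequality for the maximal operator $\mu^*f=\sup_n|\mu^nf|$, and (ii) a dense subclass of $L^1(X)$ on which almost everywhere convergence already holds. Once both are in hand, the maximal inequality upgrades convergence on the dense class to convergence on all of $L^1(X)$ in the standard way.

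The plan is therefore to split the averages into a main part and an error part using the Fourier-analytic structure of the convolution powers. First I would pass to the frequency side: writing $\hat\mu(\gamma)$ for the Fourier transform on $\mathbb{Z}$ (so $\gamma\in[-1/2,1/2]$), the hypotheses give $\hat\mu(0)=1$, $E(\mu)=0$ so that $\hat\mu'(0)=0$, and finite second moment so that $\hat\mu$ is $C^2$ with $\hat\mu(\gamma)=1-c\gamma^2+o(\gamma^2)$ near the origin, with $c=2\pi^2 m_2(\mu)>0$. Strict aperiodicity, via the Foguel theorem quoted above, gives $|\hat\mu(\gamma)|<1$ for all $\gamma\neq 0$, so away from a neighborhood of $0$ the symbol $\hat\mu(\gamma)^n$ decays geometrically. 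Near $0$, the quadratic behavior of $\hat\mu$ means that $\hat\mu(\gamma)^n$ resembles a Gaussian heat kernel symbol $e^{-c n\gamma^2}$. The strategy is to compare $\mu^n$ with the corresponding Gaussian-type kernel $G_n$ whose transform is $e^{-cn\gamma^2}$, controlling the maximal operator associated to $G_n$ by a known result (a discrete Gaussian maximal operator is dominated by the Hardy--Littlewood maximal operator, which is weak-type $(1,1)$), and then bounding the maximal operator of the difference $\mu^n-G_n$.

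For the error term I would show that $\sum_n\|\widehat{\mu^n-G_n}\|$ in an appropriate norm is summable, or more precisely that $\sup_n|(\mu^n-G_n)f|$ is controlled in $L^1$ or weak $L^1$ by a summable series of kernel bounds. The key estimate is that $|\hat\mu(\gamma)^n-e^{-cn\gamma^2}|$ is integrable with a bound that sums in $n$ after accounting for the $\sqrt n$ spatial scaling; this is where the finite second moment and the quadratic Taylor expansion do the real work, and it lets the difference operators be summed absolutely rather than merely being handled maximally. For the dense class, trigonometric polynomials (finite linear combinations of eigenfunctions $e^{2\pi i \gamma \cdot}$ under the Koopman operator of $\tau$, or more concretely bounded functions together with the convergence on a generating set) provide pointwise convergence directly, since on such functions the action reduces to multiplication by $\hat\mu(\gamma)^n$, which converges as $n\to\infty$ (to $0$ for $\gamma\neq0$ by aperiodicity, and to the function itself at $\gamma=0$).

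The main obstacle I expect is the maximal inequality, specifically extracting a weak-type $(1,1)$ bound for the Gaussian-like main term uniformly in $n$ and simultaneously controlling the error-term maximal operator. The transference principle of Calder\'on reduces the maximal estimate on the dynamical system to the analogous convolution maximal operator on $\mathbb{Z}$ itself, so the heart of the matter becomes a purely harmonic-analytic statement about the kernels $\mu^n$ on $\mathbb{Z}$: one must show $\sup_n|\mu^n * g|$ is weak-type $(1,1)$ on $\ell^1(\mathbb{Z})$. Getting a clean pointwise Gaussian upper bound for $\mu^n(k)$ that is uniform enough to dominate by the Hardy--Littlewood maximal function, while the second-moment hypothesis is exactly strong enough (and no stronger) to force the right $\gamma^2$ control, is the delicate point; the expectation-zero assumption is what kills the linear term and centers the Gaussian, without which the averages would drift and the comparison would fail.
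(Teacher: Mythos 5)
Your overall skeleton (Banach principle: a weak-type $(1,1)$ maximal inequality plus a dense class, with Calder\'on transference reducing the maximal estimate to a convolution inequality on $\mathbb{Z}$) is exactly the architecture the paper uses, but both of your technical pillars have genuine gaps. For the maximal inequality, your plan is to compare $\mu^n$ with a Gaussian kernel $G_n$ and to make the errors \emph{absolutely summable} in $n$. Under only a finite second moment this fails: the local limit theorem gives $\|\mu^n-G_n\|_{\ell^1}\to 0$ with no rate, so $\sum_n\|\mu^n-G_n\|_{\ell^1}$ need not converge, and the claim that $|\hat\mu(\gamma)^n-e^{-cn\gamma^2}|$ integrates to something summable after the $\sqrt n$ rescaling requires more than a quadratic Taylor expansion with an $o(\gamma^2)$ remainder. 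The route actually taken (following Bellow--Calder\'on, and reproduced in the paper's Theorem 2.1.3) avoids any comparison kernel: one verifies the Calder\'on--Zygmund-type regularity condition $|\mu^n(x+y)-\mu^n(x)|\leq C|y|^{\alpha}/|x|^{1+\alpha}$ for $2|y|\leq|x|$, which by their transference theorem already yields the weak $(1,1)$ bound, and this condition is checked via the single integral estimate $\sup_n\int_{-1/2}^{1/2}|\widehat{\mu^n}{}''(t)||t|\,\mbox{d}t<\infty$. That estimate uses only $|\hat\mu'(t)|\leq 4\pi^2 m_2(\mu)|t|$, $|\hat\mu''(t)|\leq 4\pi^2 m_2(\mu)$ (from $E(\mu)=0$ and finite second moment) together with the Gaussian \emph{upper bound} $|\hat\mu(t)|\leq e^{-Ct^2}$, which is where strict aperiodicity enters (via Foguel's theorem and the doubling inequality $1-|f(2t)|^2\leq 4(1-|f(t)|^2)$); no lower bound or two-sided Gaussian approximation is ever needed.

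The second gap is the dense class. Finite linear combinations of Koopman eigenfunctions are not dense in $L^1(X)$ for a general ergodic $\tau$ (for a weakly mixing system the only eigenfunctions are constants), so convergence on that class proves nothing. The correct dense class is $\{f+g-g\circ\tau:\ f\circ\tau=f,\ g\in\mbox{L}^{\infty}\}$. Convergence is trivial on invariant functions, and on coboundaries one shows $\|\mu^n g-\mu^n(g\circ\tau)\|_{\infty}\leq\|\mu^n-\mu^n\ast\delta_1\|_1\|g\|_{\infty}\to 0$; the norm convergence $\|\mu^n-\mu^n\ast\delta_1\|_1\to 0$ is obtained by dominated convergence, using the same regularity estimate $|\mu^n(k)-\mu^n(k-1)|\leq C|k-1|^{-(1+\alpha)}$ as the summable dominating function and the pointwise convergence $\hat\mu(t)^n\to 0$ for $t\neq 0$ (strict aperiodicity again) for the termwise limit. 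So your Fourier intuition about $\hat\mu(\gamma)^n\to 0$ is the right driving force, but it must be routed through the coboundary decomposition rather than through eigenfunctions.
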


The proof of the above theorem involves translating properties of the measure into equivalent conditions on the Fourier transform of the measure. 



\section{Convolution measures}

In this section we discuss sufficient conditions on the sequence of probability measures $\ds\{\nu_i\}$ so that
the operators $\ds\mu_n f(x)=\sum_{k\in\mathbb{Z}}\mu_n(k)f(\tau^k x)=\sum_{k\in\mathbb{Z}}(\nu_1\ast\cdots\ast\nu_n)(k)f(\tau^k x)$ converge 
a.e for $\ds f\in\mbox{L}^1(X)$. We will show that the maximal operator of this sequence is of weak-type $(1,1)$ and then we establish a dense class where a.e convergence holds. 
Almost everywhere convergence will follow from Banach's Principle.

\subsection{Maximal Inequality}

\noindent To establish a maximal inequality we will use the following theorems.
 
\newtheorem{thm2}{Theorem}[subsection]
\begin{thm2}[\cite{cald}]
\label{bellowweak}
Let $\ds (\mu_n)$ be a sequence of probability measures on $\mathbb{Z}$, \\
$\ds f:X\rightarrow \mathbb{R}$ and the operators \\
\begin{eqnarray*}
(\mu_nf)(x)=\sum_{k\in\mathbb{Z}}\mu_n(k)f(\tau^kx) 
\end{eqnarray*}
Let $\ds Mf(x)=\sup_{n}|\mu_nf(x)|$, denote the maximal operator.\\
Assume that there is $\ds 0<\alpha\leq 1$ and $\ds C>0$ such that for $\ds n\geq 1$

\begin{eqnarray*}
|\mu_n(x+y)-\mu_n(x)|\leq C\frac{|y|^{\alpha}}{|x|^{1+\alpha}} \:\mbox{for}\:x,y\in\mathbb{Z}\:,\, 2|y|\leq |x|
\end{eqnarray*}
then the maximal operator $\ds M$ satisfies a weak-type (1,1) inequality, namely
there exists $C$ such that for any $\ds\lambda>0$
$$m\left\{x\in X: (Mf)(x)>\lambda\right\}\leq \frac{C}{\lambda}\|f\|_1\:\mbox{for all}\: f\in\mbox{L}^1(X).$$
\end{thm2}





\noindent A sufficient condition to obtain the assumption of Theorem~\ref{bellowweak} is given by the following,

\newtheorem{cor}[thm2]{Corollary}
\begin{cor}[\cite{cald}]
Let $\ds \mu_n$ be a sequence of probability measures defined on $\ds\mathbb{Z}$ and let $\ds \hat{\mu}_n(t)$ denote its Fourier transform
for $\ds t\in[-1/2,1/2)$. We assume that\\
\begin{eqnarray*}
\sup_n\int_{-1/2}^{1/2}|\hat{\mu}''_n(t)||t|\,\mbox{d}t<\infty
\end{eqnarray*}
then there exist $\ds 0<\alpha\leq 1$ and $\ds C>0$ such that for $\ds n\geq 1$

\begin{eqnarray*}
|\mu_n(x+y)-\mu_n(x)|\leq C\frac{|y|^{\alpha}}{|x|^{1+\alpha}} \:\mbox{for}\:x,y\in\mathbb{Z}\:,\, 2|y|\leq |x|\: .
\end{eqnarray*}
\end{cor}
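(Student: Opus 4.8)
The plan is to pass to the Fourier side and convert smoothness of $\hat{\mu}_n$ into decay of the sequence $k\mapsto\mu_n(k)$. By Fourier inversion on $\mathbb{Z}$,
\[
\mu_n(x+y)-\mu_n(x)=\int_{-1/2}^{1/2}\hat{\mu}_n(t)\bigl(e^{2\pi i y t}-1\bigr)e^{2\pi i x t}\,\md t ,
\]
so the whole estimate reduces to bounding this oscillatory integral. The two elementary inputs I would use repeatedly are $|\hat{\mu}_n(t)|\le 1$ (since $\mu_n$ is a probability measure) and $|e^{2\pi i y t}-1|\le 2\pi|y|\,|t|$; the point of the factor $e^{2\pi i y t}-1$ is that it supplies the gain in $|y|$, while the decay in $|x|$ must be produced by integrating by parts against $e^{2\pi i x t}$ and using periodicity to discard the boundary contributions at $\pm 1/2$.

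Before integrating by parts I would first extract from the hypothesis the pointwise bound $|\hat{\mu}_n'(t)|\le C/|t|$ for $0<|t|\le 1/4$, uniformly in $n$. This follows by writing $t\,\hat{\mu}_n'(t)$ as $\hat{\mu}_n(2t)-\hat{\mu}_n(t)$ minus a double integral of $\hat{\mu}_n''$, and then estimating $\int_t^{2t}|\hat{\mu}_n''(r)|\,\md r\le t^{-1}\int|\hat{\mu}_n''(r)|\,|r|\,\md r$. This step is what lets me avoid any control on the moments $E(\mu_n)$ or $m_2(\mu_n)$, which the hypothesis does not supply; the uniform constant comes precisely from $\sup_n\int|\hat{\mu}_n''(t)|\,|t|\,\md t<\infty$.

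The main estimate then proceeds by splitting the integral at a radius $\rho$ comparable to $1/|x|$. On $|t|\le\rho$ I estimate directly, using $|\hat{\mu}_n|\le 1$ and $|e^{2\pi i y t}-1|\le 2\pi|y||t|$, which contributes a term of order $|y|/|x|^2$. On $\rho<|t|\le 1/2$ I integrate by parts. Organizing the resulting terms by where the derivative falls, the terms in which $\tfrac{\md}{\md t}$ hits $e^{2\pi i y t}$ carry an extra factor $y$ but oscillate at the shifted frequency $x+y$; since $2|y|\le|x|$ forces $|x+y|\ge|x|/2$, a further integration by parts against that frequency restores the decay and keeps these terms of order $|y|/|x|^2$. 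The boundary terms produced at $\pm\rho$ are controlled by the bound on $|\hat{\mu}_n'|$ above and are of the same order.

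The hard part will be the single term in which the derivative falls on $\hat{\mu}_n$, producing $\hat{\mu}_n'$. Because the hypothesis only gives $\hat{\mu}_n''\in L^1(|t|\,\md t)$, we have $\hat{\mu}_n'=O(1/|t|)$ and hence $\hat{\mu}_n'\notin L^1$ near the origin; a naive double integration by parts over the whole circle diverges there, which is exactly why the split is necessary. I would treat this term by balancing a direct estimate of its near-origin part (of size $\approx |y|\rho/|x|$) against an integration-by-parts estimate of its tail (of size $\approx 1/(\rho|x|^2)$) and optimizing in $\rho\in[1/|x|,1/2]$, the admissible range being guaranteed by $2|y|\le|x|$. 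The optimal choice $\rho\sim(|y||x|)^{-1/2}$ yields a bound of order $|y|^{1/2}/|x|^{3/2}$, i.e. the desired inequality with $\alpha=\tfrac12$. With one additional integration by parts exploiting the frequency $x+y$ one can push this term, and hence the whole estimate, down to $\alpha=1$; for the application only the existence of some $\alpha\in(0,1]$ is needed. Combining all contributions gives $|\mu_n(x+y)-\mu_n(x)|\le C|y|^\alpha/|x|^{1+\alpha}$ for $2|y|\le|x|$, with $C$ uniform in $n$, which is the assertion.
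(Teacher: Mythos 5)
The paper does not actually prove this corollary---it is quoted from \cite{cald} and used as a black box---so there is no internal argument to measure you against. Your outline is, in substance, the Fourier-side proof of the cited Bellow--Calder\'on lemma: inversion, the vanishing factor $e^{2\pi iyt}-1$ to neutralize the possible singularity of $\hat{\mu}_n''$ at $t=0$, a split at radius comparable to $1/|x|$, direct estimation inside, integration by parts outside with periodicity cancelling the endpoint contributions at $\pm 1/2$, the bound $|\hat{\mu}_n'(t)|\leq C(1+A)/|t|$ extracted from the hypothesis, and the re-summation at frequency $x+y$ (where $|x+y|\geq |x|/2$) for the terms in which derivatives land on the exponential factor. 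That is the right architecture and it does yield the statement.

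Two steps need repair. First, a small one: the identity $t\hat{\mu}_n'(t)=\hat{\mu}_n(2t)-\hat{\mu}_n(t)-\int_t^{2t}(2t-s)\hat{\mu}_n''(s)\,\mbox{d}s$ keeps $[t,2t]$ inside the fundamental domain only for $|t|\leq 1/4$; use instead $\frac{t}{2}\hat{\mu}_n'(t)=\hat{\mu}_n(t)-\hat{\mu}_n(t/2)+\int_{t/2}^{t}(s-\frac{t}{2})\hat{\mu}_n''(s)\,\mbox{d}s$, which gives $|\hat{\mu}_n'(t)|\leq 2(2+A)/|t|$ on all of $[-1/2,1/2]$. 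Second, and more seriously, your fallback ``balancing'' estimate for the term containing $\hat{\mu}_n'$ does not close: the tail contribution after integrating by parts is of size $|y|/(\rho x^2)$, not $1/(\rho x^2)$, because the differentiated exponential contributes $h'(t)=2\pi iy\,e^{2\pi iyt}$ and hence a factor of $y$ (at best one gets $|y|\log(1/\rho)/x^2$ from $\int_{|t|>\rho}|\hat{\mu}_n'|$). Optimizing in $\rho$ then produces $|y|/|x|^{3/2}$ (or $|y|\log|x|/x^2$), and for $|y|\sim |x|/2$ this is of order $|x|^{-1/2}$ (resp.\ $\log|x|/|x|$), which is not dominated by $|y|^{\alpha}/|x|^{1+\alpha}\sim |x|^{-1}$ for any $\alpha>0$. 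So no admissible $\alpha$ comes out of that route, and the extra integration by parts at frequency $x+y$---which you present as an optional upgrade from $\alpha=\frac12$ to $\alpha=1$---is in fact the step that makes the argument work at all. With it, every term (in particular $y\int_{|t|>1/|x|}\hat{\mu}_n'(t)e^{-2\pi i(x+y)t}\,\mbox{d}t$ and $y^2\int_{|t|>1/|x|}\hat{\mu}_n(t)e^{-2\pi i(x+y)t}\,\mbox{d}t$, estimated using $\int_{|t|>\rho}|\hat{\mu}_n''|\leq A/\rho$ and the pointwise bound on $\hat{\mu}_n'$ at the cut) is $O\bigl((1+A)|y|/x^2\bigr)$, and the corollary holds with $\alpha=1$ and a constant depending only on $\sup_n\int|\hat{\mu}_n''(t)||t|\,\mbox{d}t$.
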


\newtheorem{th3}[thm2]{Theorem}
\begin{th3}
\label{pos}
Let $\ds (\nu_{n})$ be a sequence of strictly aperiodic probability measures on $\ds\mathbb{Z}$ such that
\begin{enumerate}
\item $\ds E(\nu_n)=0$\, , \, $\ds\forall n$
\item
$\ds \phi(n)=\sum_{i=1}^{n} m_2(\nu_i)=O(n) $
\item 
There exist a constant $C$ and an integer $\ds N_0>0$, such that $\ds |\hat{\nu}_{n}(t)|\leq e^{-Ct^2}$, $\ds\forall n>N_0$ and $\ds t\in[-1/2,1/2)$ 
\end{enumerate}
then for $\ds \mu_n=\nu_1\ast\cdots\nu_n$ we have that
$$\sup_{n}\int_{-1/2}^{1/2}|\hat{\mu}''_n(t)||t|\,\mbox{d}t<\infty$$
and therefore the maximal operator
$\ds Mf(x)=\sup_{n\in\mathbb{Z}}|\mu_nf(x)|$ is weak-type $(1,1)$.
\end{th3}
\begin{proof}
Without loss of generality we can assume that $\ds N_0=1$ .\\
Let\\
\begin{eqnarray*} 
a_n=4\pi^2m_2(\nu_{n})
\end{eqnarray*}
Under our hypothesis one can show that for $\ds \hat{\nu}_n(t)=\sum_k \nu_n(k)e^{2\pi ikt}$
and \\$\ds t\in[-1/2,1/2)$,
\begin{eqnarray*}
|\hat{\nu}_{n}'(t)|& \leq & a_{n}|t|,\quad \mbox{for}\:t\in[-1/2,1/2), \\
|\hat{\nu}_{n}''(t)| & \leq & a_n ,\quad \mbox{for}\:t\in[-1/2,1/2).
\end{eqnarray*}
Observe that since $\ds \mu_{n}  = \nu_{1}\ast\cdots\ast\nu_{n}$,
\begin{eqnarray*} 
\hat{\mu}_{n}(t) & = & \prod_{i=1}^{n}\hat{\nu}_i(t), \\
\hat{\mu}'_{n}(t) & = & \sum_{j=1}^{n}\prod_{\ds i=1\atop \ds i\neq j}^{n}\hat{\nu_i}(t)\hat{\nu_j}'(t), \\
\hat{\mu}_n''(t) & = & \sum_{j=1}^{n}\prod_{\ds i=1\atop \ds i\neq j}^{n}\hat{\nu_i}(t)\hat{\nu_j}''(t)+\sum_{j=1}^{n}\sum_{\ds k=1\atop \ds k\neq j}^{n}\prod_{\ds i=1\atop \ds i\neq j,k}^{n}\hat{\nu_{i}}(t)\hat{\nu_{j}}'(t)\hat{\nu_k}'(t).
\end{eqnarray*}
These imply that \\
\begin{eqnarray*}
|\hat{\mu}_{n}''(t)| & \leq & \sum_{j=1}^{n}a_j e^{-(n-1)Ct^2}+\sum_{j=1}^{n}a_j\sum_{\ds k=1\atop \ds k\neq j}^{n}a_k e^{-(n-2)Ct^2}|t|^2  \\
 & \leq & 4\pi^2\phi(n)e^{-(n-1)Ct^2}+16\pi^4 \phi(n)^2e^{-(n-2)Ct^2}|t|^2 ,
 \end{eqnarray*}
so that \\
\begin{eqnarray*}
\int_{-1/2}^{1/2}|\hat{\mu}_n''(t)||t|\,\mbox{d}t & \leq & 4\pi^2\phi(n)\int_{-1/2}^{1/2}e^{-(n-1)Ct^2}|t|\,\mbox{d}t+16\pi^4\phi(n)^2\int_{-1/2}^{1/2}e^{-(n-2)Ct^2}|t|^3\,\mbox{d}t\\
& \leq & I_1+I_2 .
\end{eqnarray*}
\begin{eqnarray*}
I_1 & = & 4\pi^2\phi(n)\int_{-1/2}^{1/2}e^{-(n-1)Ct^2}|t|\,\mbox{d}t=\\
 & = & 8\pi^2\phi(n)\int_{0}^{1/2}e^{-(n-1)Ct^2}t\,\mbox{d}t\\
 & = & 8\pi^2\phi(n)\left[\frac{e^{-(n-1)Ct^2}}{-2(n-1)C}\right]_0^{1/2}\\
 & = & 8\pi^2\phi(n)\left(\frac{e^{\frac{-(n-1)C}{4}}}{-2(n-1)C}+\frac{1}{2(n-1)C}\right)\\
 & = & 4\pi^2\frac{\phi(n)}{C(n-1)}\left(1-e^{-\frac{(n-1)C}{4}}\right) ,
 \end{eqnarray*}  
and\\
\begin{eqnarray*}
I_2 & = & 16\pi^4\phi(n)^2\int_{-1/2}^{1/2}e^{-(n-2)Ct^2}|t|^3\,\mbox{d}t \\
 & = & 32\pi^4\phi(n)^2\int_{0}^{1/2}e^{-(n-2)Ct^2}t^3\,\mbox{d}t\\
& = & 16\pi^4\phi(n)^2\int_0^{1/4}e^{-(n-2)Cu}u\,\mbox{d}u
\end{eqnarray*}
\begin{eqnarray*}
& = & 16\pi^4\phi(n)^2\left(\left.-\frac{ue^{-(n-2)Cu}}{(n-2)C}\right|_0^{1/4}+\frac{1}{(n-2)C}\int_0^{1/4}e^{-(n-2)Cu}\,\mbox{d}u\right)\\
& = & 16\pi^4\phi(n)^2\left(-\frac{e^{-\frac{(n-2)C}{4}}}{4(n-2)C}-\frac{1}{(n-2)^2C^2}\left.e^{-(n-2)Cu}\right|_0^{1/4}\right)\\
& = & 16\pi^4\phi(n)^2\left(-\frac{e^{-\frac{(n-2)C}{4}}}{4(n-2)C}-\frac{1}{(n-2)^2C^2}(e^{-\frac{(n-2)C}{4}}-1)\right)\\
& = & 16\pi^4 \left(-\frac{1}{4C}\left(\frac{\phi(n)}{n-2}\right)^2 e^{-\frac{(n-2)C}{4}}(n-2)-\frac{1}{C^2}\left(\frac{\phi(n)}{n-2}\right)^2(e^{-\frac{(n-2)C}{4}}-1)\right)
\end{eqnarray*}
Both integrals $\ds I_1$ and $\ds I_2$ are bounded, given that $\ds \phi(n)=O(n)$ .\\
Hence,
\begin{eqnarray*}
\sup_{n}\int_{-1/2}^{1/2}|\hat{\mu}''_{n}(t)||t|\,\mbox{d}t<\infty .
\end{eqnarray*}
\end{proof}

\newtheorem{l2}[thm2]{Lemma}
\begin{l2}[\cite{petrov}]
\label{vp}
Let $\ds f(t)$ be a characteristic function of a random variable $X$, then for all real numbers $t$, 
$\ds 1-|f(2t)|^2\leq 4(1-|f(t)|^2)$.
\end{l2}

\noindent This Lemma helps us prove the following result, which is a modification
of a Theorem found in~\cite{petrov}.

\newtheorem{l3}[thm2]{Lemma}
\begin{l3}
\label{vp2}
If $\ds |\hat{\mu}(t)|\leq c<1$ for $\ds \frac{1}{2}>|t|\geq b$
and for some $\ds b$ such that $\ds |b|<\frac{1}{4}$, then
$\ds |\hat{\mu}(t)|\leq 1-\frac{1-c^2}{8b^2}t^2$ for $\ds |t|\leq b$.
\end{l3}

\begin{proof}
For $\ds t=0$ the claim is obvious. Choose $\ds t$ such that $\ds |t|<b$. We can
find $n$ such that $\ds 2^{-n}b\leq |t|<2^{-n+1}b$, then $\ds b\leq 2^n|t|<2b$.
Hence $\ds |\hat{\mu}(2^n t)|\leq c$. Lemma~\ref{vp} implies that by induction
$\ds 1-|f(2^n t)|^2\leq 4^n(1-|f(t)|^2)$ holds for all $\ds t$ and any characteristic function $f$. 
Using the fact that $\ds \hat{\mu}(t)=\overline{f(2\pi t)}$ for $\ds -1/2\leq t<1/2$, we have that
\begin{eqnarray*}
1-|\hat{\mu}(2^nt)|^2 & = & 1-|\overline{f(2^n 2\pi t)}|^2 \\
& \leq & 4^n(1-|f(2\pi t)|^2)\\
& = & 4^n(1-|\hat{\mu}(t)|^2)\: , 
\end{eqnarray*}
which implies that
\begin{eqnarray*}
1-|\hat{\mu}(t)|^2 & \geq & \frac{1}{4^n}(1-|\hat{\mu}(2^n t)|^2) \\
& \geq & \frac{1}{4^n} (1-c^2)\geq \frac{1-c^2}{4b^2}t^2\: .
\end{eqnarray*}
$\ds |\hat{\mu}(t)| \leq 1-\frac{1-c^2}{8b^2}t^2$ for $\ds |t|<b$ follows.
\end{proof}


\newtheorem{corvp}[thm2]{Lemma}
\begin{corvp}
\label{corvp}
If $\ds\mu$ is a strictly aperiodic probability measure on $\ds\mathbb{Z}$ and $\ds\hat{\mu}(t)$ denotes the Fourier transform of $\ds\mu$ for $\ds t\in(-1/2,1/2]$, then there exist positive constants  $\ds c<1$ and $d$ such that $$|\hat{\mu}(t)|\leq 1-\frac{1-c^2}{8d^2}t^2\:\mbox{for}\: |t|\leq d$$ which implies that there exists $\ds C>0$ such that
$$ |\hat{\mu}(t)|\leq e^{-Ct^2}\:,\forall\:t\in[-1/2,1/2)\: .$$
\end{corvp}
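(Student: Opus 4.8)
The plan is to read off the quadratic estimate as a direct application of Lemma~\ref{vp2}, after using strict aperiodicity to verify that lemma's hypothesis, and then to pass from the quadratic estimate to the Gaussian one by a simple two--region comparison.

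First I would identify the character group of $\ds\mathbb{Z}$ with the circle, each nontrivial character corresponding to a point $\ds t\in(-1/2,1/2]$ with $\ds t\neq 0$ and the trivial character to $\ds t=0$. Since $\ds\mu$ is strictly aperiodic, Foguel's theorem (stated above) gives $\ds|\hat{\mu}(t)|<1$ for every $\ds t\neq 0$. Now fix any $\ds d$ with $\ds 0<d<1/4$ and set $\ds K=\{t:\ d\leq|t|\leq 1/2\}$. Because $\ds\mu$ is a probability measure, the series $\ds\hat{\mu}(t)=\sum_k\mu(k)e^{2\pi ikt}$ converges uniformly (Weierstrass $M$--test, as $\ds\sum_k\mu(k)=1$), so $\ds|\hat{\mu}|$ is continuous; since $\ds K$ is compact and avoids the origin, $\ds c:=\max_{t\in K}|\hat{\mu}(t)|<1$. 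In particular $\ds|\hat{\mu}(t)|\leq c<1$ whenever $\ds 1/2>|t|\geq d$, which is precisely the hypothesis of Lemma~\ref{vp2} with $\ds b=d$. That lemma then yields $\ds|\hat{\mu}(t)|\leq 1-\frac{1-c^2}{8d^2}t^2$ for $\ds|t|\leq d$, the first assertion.

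For the Gaussian bound I would split $\ds[-1/2,1/2)$ along $\ds|t|=d$. On $\ds|t|\leq d$ the elementary inequality $\ds 1-x\leq e^{-x}$ applied to the quadratic bound gives $\ds|\hat{\mu}(t)|\leq e^{-\frac{1-c^2}{8d^2}t^2}$. On $\ds d\leq|t|\leq 1/2$ I would use $\ds|\hat{\mu}(t)|\leq c$ together with $\ds t^2\leq 1/4$: writing $\ds c=e^{-\log(1/c)}$, the bound $\ds c\leq e^{-Ct^2}$ holds once $\ds Ct^2\leq\log(1/c)$, which is guaranteed by $\ds C\leq 4\log(1/c)$. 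Hence the choice
\[
C=\min\left(\frac{1-c^2}{8d^2},\,4\log\frac{1}{c}\right)>0
\]
makes $\ds|\hat{\mu}(t)|\leq e^{-Ct^2}$ valid on both regions at once, which is the second assertion.

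The step needing the most care is the uniform bound $\ds c<1$: it rests on combining Foguel's theorem with the compactness of the circle minus a neighborhood of the identity and the continuity of $\ds\hat{\mu}$, and one must take $\ds d<1/4$ so that $\ds b=d$ is admissible in Lemma~\ref{vp2}. Everything after that is routine, namely the bookkeeping of selecting $\ds C$ as the minimum of two positive quantities and noting that the endpoint $\ds t=\pm 1/2$ causes no trouble, since Foguel again forces $\ds|\hat{\mu}(\pm 1/2)|<1$.
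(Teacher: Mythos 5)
Your proof is correct, and it follows exactly the route the paper intends: the lemma is stated without proof immediately after Lemma~\ref{vp2}, as its consequence, and your argument (Foguel's theorem plus continuity and compactness to produce the constants $c<1$ and $d<1/4$ needed in Lemma~\ref{vp2}, then $1-x\leq e^{-x}$ on $|t|\leq d$ and the crude bound $c\leq e^{-Ct^2}$ on $d\leq|t|\leq 1/2$) is the standard way to fill it in. Your attention to the admissibility condition $d<1/4$ and to the endpoint $t=-1/2$ is exactly the care the omitted proof requires.
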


\noindent The third condition of Theorem~\ref{pos} replaces the condition of strict aperiodicity
in the case when all of the $\ds\nu_i$'s are the same measure, i.e $\ds\nu_i=\nu$.

\newtheorem{newlemma}[thm2]{Lemma}
\begin{newlemma}
\label{lemma25}
Let $\ds \{\nu_n\}$ be a sequence of probability measures on $\ds\mathbb{Z}$ .\\
The following are equivalent \\
\begin{enumerate}
\item
$\ds\forall \delta>0$ 
\begin{eqnarray*}
\overline{\lim_{n\rightarrow\infty}}\sup_{|t|>\delta}|\hat{\nu}_n(t)|<1\:\mbox{(asymptotically strictly aperiodic)}
\end{eqnarray*}
\item
There exist $\ds C$ and $\ds N_0$ such that
\begin{eqnarray*}
|\hat{\nu}_n(t)|\leq e^{-Ct^2} \quad,\: \forall n>N_0
\end{eqnarray*}
\end{enumerate}
\end{newlemma}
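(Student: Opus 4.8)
The plan is to prove the two implications separately; the implication $(2)\Rightarrow(1)$ is essentially immediate, while $(1)\Rightarrow(2)$ is the real content and will rest entirely on Lemma~\ref{vp2}.

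For $(2)\Rightarrow(1)$, I would simply unwind the Gaussian bound. Suppose $|\hat{\nu}_n(t)|\leq e^{-Ct^2}$ for all $n>N_0$. Fix $\delta>0$. For every $t$ with $\delta<|t|<1/2$ we have $t^2>\delta^2$, so $|\hat{\nu}_n(t)|\leq e^{-C\delta^2}$ for all such $t$ and all $n>N_0$. Taking the supremum over $|t|>\delta$ and then the $\limsup$ in $n$ gives
$$\overline{\lim_{n\to\infty}}\,\sup_{|t|>\delta}|\hat{\nu}_n(t)|\leq e^{-C\delta^2}<1,$$
which is exactly condition $(1)$. The finitely many indices $n\leq N_0$ are irrelevant since only large $n$ affect the $\limsup$.

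For $(1)\Rightarrow(2)$, I would first extract a uniform threshold bound and then feed it into Lemma~\ref{vp2}. Fix once and for all a number $\delta$ with $0<\delta<1/4$, say $\delta=1/8$. By hypothesis $L:=\overline{\lim}_n\sup_{|t|>\delta}|\hat{\nu}_n(t)|<1$, so I can choose $c$ with $L<c<1$; by the definition of $\limsup$ there is an $N_0$ with $\sup_{|t|>\delta}|\hat{\nu}_n(t)|\leq c$ for every $n>N_0$ (using continuity of $\hat{\nu}_n$ to cover the boundary $|t|=\delta$, or, more cleanly, applying the hypothesis at a slightly smaller $\delta'<\delta$ so that $[\delta,1/2)$ sits strictly inside $(\delta',1/2)$). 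Then for each $n>N_0$ the measure $\nu_n$ satisfies the hypothesis of Lemma~\ref{vp2} with this $c$ and with $b=\delta<1/4$, yielding
$$|\hat{\nu}_n(t)|\leq 1-\frac{1-c^2}{8\delta^2}\,t^2\qquad\text{for }|t|\leq\delta.$$

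It then remains to upgrade this to a Gaussian bound valid on all of $[-1/2,1/2)$. Setting $C_1=\tfrac{1-c^2}{8\delta^2}>0$ and invoking the elementary inequality $1-x\leq e^{-x}$, the displayed estimate gives $|\hat{\nu}_n(t)|\leq e^{-C_1t^2}$ for $|t|\leq\delta$ and $n>N_0$. On the outer region $\delta<|t|<1/2$ I instead use the flat bound $|\hat{\nu}_n(t)|\leq c$; since $t^2\leq1/4$ there, choosing $C_2=-4\ln c>0$ makes $c\leq e^{-C_2t^2}$ hold throughout that region. Taking $C=\min(C_1,C_2)>0$ then produces $|\hat{\nu}_n(t)|\leq e^{-Ct^2}$ for all $t\in[-1/2,1/2)$ and all $n>N_0$, which is $(2)$. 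I expect the main obstacle to be the forward direction, and within it the two bookkeeping points: confirming that a single $N_0$ makes the threshold bound $|\hat{\nu}_n(t)|\leq c$ hold for all large $n$ (this is precisely where the asymptotic, $\limsup$-form of condition $(1)$ is used), and then reconciling the inner quadratic estimate with the outer flat estimate into one exponential with a common constant. The restriction $\delta<1/4$ is dictated by the hypothesis of Lemma~\ref{vp2}, and the quadratic-to-exponential passage via $1-x\leq e^{-x}$ is routine.
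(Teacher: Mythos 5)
Your proposal is correct and follows essentially the same route as the paper: extract a uniform threshold bound $|\hat{\nu}_n(t)|\leq c<1$ off a neighborhood of the origin for all $n$ beyond some $N_0$, apply Lemma~\ref{vp2} to get the quadratic bound $1-Kt^2$ near zero, and convert to the Gaussian bound on all of $[-1/2,1/2)$. You are in fact more careful than the paper's own proof (which compresses the final patching step into one line and cites Lemma~\ref{vp} where Lemma~\ref{vp2} is the result actually being used), but there is no substantive difference in the argument.
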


\begin{proof}
$\ds (2)\Rightarrow (1)$ is obvious.\\
To show that $\ds (1)\Rightarrow (2)$ , since $\ds\forall\delta>0$ \,  $\ds\overline{\lim_{n\rightarrow\infty}}\sup_{\ds|t|>\delta}|\hat{\nu}_n(t)|<1$, \\
given $\ds\epsilon>0$ we can choose $\ds\delta>0$ and $\ds N\in\mathbb{Z}$ such that $\ds\sup_{|t|>\delta}|\hat{\nu}_n(t)|<1-\epsilon$, 
$\ds\forall n>N$. By Lemma~\ref{vp}, 
\begin{eqnarray*}
|\hat{\nu}_n(t)|& \leq & 1-Kt^2 
\end{eqnarray*}
for some constant $K$, $\ds n\geq N$ and $\ds |t|<\delta$ .\\
So that there exists a constant $C$ such that 
\begin{eqnarray*} 
|\hat{\nu}_n(t)|& \leq & e^{-Ct^2} 
\end{eqnarray*}
for all $\ds t\in [-1/2,1/2)$, $\ds \forall n\geq N$ .    
\end{proof}

\subsection{Dense set and Almost everywhere convergence in $\ds\mbox{L}^1(X)$}

\newtheorem{lmalmost}{Lemma}[subsection]
\begin{lmalmost}
\label{almost}
Let $\ds\mu_n$ be a sequence of probability measures on $\ds\mathbb{Z}$ such that
\begin{enumerate}
\item there is $\ds 0<\alpha\leq 1$ and $\ds C>0$ such that for $\ds n\geq 1$\\
\begin{eqnarray*}
|\mu_n(x+y)-\mu_n(x)|\leq C\frac{|y|^{\alpha}}{|x|^{1+\alpha}} & x,y\in\mathbb{Z} & 2|y|\leq |x|\: ,
\end{eqnarray*}
\item $\ds\hat{\mu}_n(t)\xrightarrow{n\rightarrow\infty}0$ for a.e $\ds t\in[-1/2,1/2)$\: .
\end{enumerate}
then\\
\begin{eqnarray*}
\|\mu_n-\mu_n\ast\delta_1\|_1\xrightarrow{n\rightarrow \infty}0\: .
\end{eqnarray*}
\end{lmalmost}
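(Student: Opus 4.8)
The plan is to estimate the $\ell^1$ norm
$\|\mu_n-\mu_n\ast\delta_1\|_1=\sum_{k\in\mathbb{Z}}|\mu_n(k)-\mu_n(k-1)|$
by splitting the sum into a central block $|k|\le R$ and a tail $|k|>R$, controlling the tail with hypothesis (1) and the central block with hypothesis (2). Writing $g_n(k)=\mu_n(k)-\mu_n(k-1)$, its Fourier transform is $\hat g_n(t)=\hat\mu_n(t)\bigl(1-e^{2\pi i t}\bigr)$, and one computes $|1-e^{2\pi i t}|^2=4\sin^2(\pi t)$.

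First I would dispose of the tail. Taking $x=k-1$ and $y=1$ in hypothesis (1), the requirement $2|y|\le|x|$ becomes $|k-1|\ge 2$, which holds whenever $|k|>R$ for any fixed $R\ge 3$; for such $k$ we obtain $|g_n(k)|\le C\,|k-1|^{-(1+\alpha)}$ uniformly in $n$. Summing,
$$\sum_{|k|>R}|g_n(k)|\le C\sum_{|k|>R}|k-1|^{-(1+\alpha)}=O\!\left(R^{-\alpha}\right),$$
a bound independent of $n$. Hence, given $\varepsilon>0$, I can fix $R$ so large that this tail is below $\varepsilon/2$ for every $n$ simultaneously.

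Next, with $R$ now fixed, I would estimate the central block by Cauchy--Schwarz and Plancherel:
$$\sum_{|k|\le R}|g_n(k)|\le (2R+1)^{1/2}\|g_n\|_2=(2R+1)^{1/2}\left(\its|\hat\mu_n(t)|^2\,4\sin^2(\pi t)\dx t\right)^{1/2}.$$
Because $\mu_n$ is a probability measure we have $|\hat\mu_n(t)|\le 1$, so the integrand is dominated by the integrable function $4\sin^2(\pi t)\le 4$; by hypothesis (2), $\hat\mu_n(t)\to 0$ for a.e. $t$, so the integrand tends to $0$ a.e. The Dominated Convergence Theorem then forces $\|g_n\|_2\to 0$, and therefore the central block (of fixed length $2R+1$) falls below $\varepsilon/2$ once $n$ is sufficiently large. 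Combining the two estimates gives $\|\mu_n-\mu_n\ast\delta_1\|_1<\varepsilon$ for all large $n$.

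The one point requiring care is the order of quantifiers, and this is really where the two hypotheses must cooperate: the splitting radius $R$ has to be chosen \emph{first}, using the uniform-in-$n$ tail bound supplied by (1), and only afterwards may one invoke dominated convergence on the now-fixed central block. The Cauchy--Schwarz passage is what makes the central block summable from its $L^2$ control, but that passage is unavailable on all of $\mathbb{Z}$ at once; without the uniform decay from (1) to cut off the tail, no $L^2\to L^1$ argument would close. Thus the interplay between the Hölder-type smoothness of (1) and the pointwise Fourier decay of (2) is exactly what drives the proof.
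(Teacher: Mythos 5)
Your proof is correct and follows essentially the same route as the paper's: hypothesis (1) supplies a bound on $|\mu_n(k)-\mu_n(k-1)|$ by a summable function of $k$ uniformly in $n$, which disposes of the tail, while hypothesis (2) together with a dominated/bounded convergence argument on the Fourier side handles the remaining terms. The only cosmetic difference is that the paper shows $\mu_n(k)-\mu_n(k-1)\to 0$ for each fixed $k$ via Fourier inversion and then applies dominated convergence to the sum over $k$, whereas you run an explicit $\varepsilon/2$ split and control the central block in one stroke by Cauchy--Schwarz and Plancherel; both arguments are valid and rest on exactly the same interplay between the two hypotheses.
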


\begin{proof}
Note that by the first assumption 
\begin{eqnarray*}
|\mu_n(k)-\mu_n\ast\delta_1(k)| & =  & |\mu_n(k-1+1)-\mu_n(k-1)|\\
&\leq & C\frac{1}{(k-1)^{1+\alpha}} \quad \mbox{for}\:2<|k-1|\: .
\end{eqnarray*}
This implies that the sequence $\ds |\mu_n(k)-\mu_n\ast\delta_1(k)|$ is bounded by a summable function. By Lebesgue's dominated convergence theorem the condition
\begin{eqnarray*}
\|\mu_n-\mu_n\ast\delta_1\|_1\xrightarrow{n\rightarrow \infty}0
\end{eqnarray*}
holds if we show that 
\begin{eqnarray*}
|\mu_n(k)-\mu_n(k-1)|\xrightarrow{n\rightarrow\infty}0\quad\forall k\: .
\end{eqnarray*}
Indeed, observe that
\begin{eqnarray*}
|\mu_n(k)-\mu_n(k-1)| & = & \left|\int_{-1/2}^{1/2}\hat{\mu}_n(t)(e^{-2\pi ikt}-e^{-2\pi i(k-1)t})\,\mbox{d}t\right|\\
& \leq & \int_{-1/2}^{1/2}|\hat{\mu}_n(t)||e^{-2\pi ikt}||1-e^{2\pi it}|\,\mbox{d}t\rightarrow0\:\mbox{as}\: n\rightarrow\infty\: .
\end{eqnarray*}
by $(2)$ and the Bounded Convergence Theorem.
\end{proof}

\newtheorem{almostevery}[lmalmost]{Theorem}
\begin{almostevery}
\label{almostevery}
Let $\ds (\nu_{n})$ be a sequence of strictly aperiodic probability measures on $\ds\mathbb{Z}$ such that
\begin{enumerate}
\item $\ds E(\nu_n)=0$\, , \, $\ds\forall n$
\item
$\ds \phi(n)=\sum_{i=1}^{n} m_2(\nu_i)=O(n) $
\item 
There exist a constant $C$ and an integer $\ds N_0>0$, such that $\ds |\hat{\nu}_{n}(t)|\leq e^{-Ct^2},$ $\ds\forall n>N_0$ and $\ds t\in[-1/2,1/2)$\: . 
\end{enumerate}
The sequence of operators $\ds\{\mu_nf\}$ converges almost everywhere in $\ds\mbox{L}^1(X)$. 
\end{almostevery}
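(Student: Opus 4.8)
The plan is to obtain almost everywhere convergence from Banach's Principle, for which two ingredients are required: a maximal inequality and a dense subclass of $L^1(X)$ on which a.e.\ convergence holds. The maximal inequality is precisely the conclusion of Theorem~\ref{pos}: under hypotheses (1)--(3) the operator $Mf=\sup_n|\mu_n f|$ is of weak type $(1,1)$. By Banach's Principle this forces the set $\{f\in L^1(X):\mu_n f \text{ converges a.e.}\}$ to be closed in $L^1(X)$, so it suffices to exhibit a dense set on which convergence holds.

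First I would verify that Lemma~\ref{almost} applies to the sequence $\mu_n=\nu_1\ast\cdots\ast\nu_n$, which will give $\|\mu_n-\mu_n\ast\delta_1\|_1\to 0$. Its first hypothesis, the H\"older-type estimate on $\mu_n$, follows from the Corollary to Theorem~\ref{bellowweak} together with the bound $\sup_n\int_{-1/2}^{1/2}|\hat{\mu}_n''(t)||t|\,\mbox{d}t<\infty$ established in Theorem~\ref{pos}. Its second hypothesis, $\hat{\mu}_n(t)\to 0$ for a.e.\ $t$, is immediate from $\hat{\mu}_n(t)=\prod_{i=1}^n\hat{\nu}_i(t)$: for $n>N_0$ one has $|\hat{\mu}_n(t)|\leq\prod_{i=1}^{N_0}|\hat{\nu}_i(t)|\cdot e^{-C(n-N_0)t^2}\leq e^{-C(n-N_0)t^2}$, which tends to $0$ for every $t\neq 0$.

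For the dense class I would take the linear span $\mathcal{D}$ of the $\tau$-invariant functions in $L^\infty(X)$ together with the coboundaries $h-h\circ\tau$, $h\in L^\infty(X)$. Writing $\delta_1$ for the unit mass at $1$, so that $\delta_1 h=h\circ\tau$, a coboundary satisfies $\mu_n(h-h\circ\tau)=(\mu_n-\mu_n\ast\delta_1)h$, whence
$$\sup_x|\mu_n(h-h\circ\tau)(x)|\leq\|h\|_\infty\sum_k|\mu_n(k)-\mu_n(k-1)|=\|h\|_\infty\|\mu_n-\mu_n\ast\delta_1\|_1\to 0$$
by the previous step; thus $\mu_n f$ converges to $0$ uniformly, hence a.e., on coboundaries, while on invariant $f$ one trivially has $\mu_n f=f$ for all $n$. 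Consequently $\mu_n f$ converges a.e.\ for every $f\in\mathcal{D}$.

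It remains to check that $\mathcal{D}$ is dense in $L^1(X)$, and this is the step I expect to require the most care. I would argue by duality: if $g\in L^\infty(X)=(L^1(X))^\ast$ annihilates every coboundary, then $\int g\,(h-h\circ\tau)\,\mbox{d}m=0$ for all $h\in L^\infty(X)$, and since $\tau$ is measure preserving this reads $\int (g-g\circ\tau^{-1})h\,\mbox{d}m=0$ for all such $h$, forcing $g$ to be $\tau$-invariant; if in addition $g$ annihilates the invariant functions, then testing against $h=g$ gives $\int|g|^2\,\mbox{d}m=0$, so $g=0$. Hence the annihilator of $\mathcal{D}$ is trivial and $\mathcal{D}$ is dense. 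Combining the weak-type $(1,1)$ maximal inequality of Theorem~\ref{pos} with the a.e.\ convergence on the dense set $\mathcal{D}$, Banach's Principle yields a.e.\ convergence of $\mu_n f$ for all $f\in L^1(X)$.
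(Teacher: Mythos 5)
Your proof is correct and follows essentially the same route as the paper: the weak-type $(1,1)$ maximal inequality from Theorem~\ref{pos}, a.e.\ convergence on the dense class of invariant functions plus $L^\infty$ coboundaries via $\|\mu_n-\mu_n\ast\delta_1\|_1\to 0$ from Lemma~\ref{almost}, and Banach's Principle. You additionally spell out two steps the paper leaves implicit -- the verification of the hypotheses of Lemma~\ref{almost} (in particular $\hat{\mu}_n(t)\to 0$ a.e.\ from the product formula) and the duality argument for density of the class -- both of which are done correctly.
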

\begin{proof}
Since the maximal operator has been shown to be of weak-type $(1,1)$ (Theorem~\ref{pos}), it is enough to show that convergence
holds on the dense class $\ds\{f+g-g\circ\tau: f\circ\tau=f,\:g\in\mbox{L}_{\infty}\}$.
Clearly, $\ds\mu_n f$ converges almost everywhere for $\ds\tau-$invariant functions $f$.
Then, to show that 
$\ds(\mu_n g-\mu_n(g\circ\tau))$ converges almost everywhere for $\ds g\in\mbox{L}_{\infty}$
it is enough to show that 
\begin{eqnarray*}
\|\mu_ng-\mu_n(g\circ\tau)\|_{\infty}\xrightarrow{n\rightarrow\infty}0
\end{eqnarray*}
But 
\begin{eqnarray*}
\|\mu_ng-\mu_n(g\circ\tau)\|_{\infty} & \leq & \|\mu_ng-(\mu_n\ast\delta_1) g\|_{\infty}  \\
& \leq & \|\mu_n-\mu_n\ast\delta_1\|_1\|g\|_{\infty}
\end{eqnarray*}
So that it is enough to show the following
\begin{eqnarray*}
\|\mu_n-\mu_n\ast\delta_1\|_1\xrightarrow{n\rightarrow\infty}0
\end{eqnarray*}  
which holds according to Lemma ~\ref{almost}.
\end{proof}


\theoremstyle{definition}

\section{Collections with uniformly bounded second moments}
\theoremstyle{plain}

\newtheorem{lems}{Lemma}[section]
\begin{lems}
\label{lems}
Let $\ds A\subseteq \mathbb{C}^4$ be the set 
$$A=\{(a_1,a_2,z_1,z_2)\, :\, a_1+a_2=1\, , \, a_1,a_2\geq 0\, , \, |z_1|=|z_2|=1\}$$ and $\ds S(\delta,\eta)\subseteq A$ be the set 
$$S(\delta,\eta)=\{(a_1,a_2,z_1,z_2)\,:\, a_1,a_2\geq \delta\, \mbox{and}\, |z_1-z_2|\geq\eta\}\, ,0< \delta,0<\eta\}\, ,$$
then there exists $\ds \rho=\rho(\delta,\eta)<1$ such that $\ds\forall(a_1,a_2,z_1,z_2)\in S(\delta,\eta)$ \\
$\ds |a_1z_1+a_2z_2|\leq\rho$ holds.
\end{lems}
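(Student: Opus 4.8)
The plan is to avoid compactness entirely and instead establish an exact algebraic identity for $\ds|a_1 z_1 + a_2 z_2|^2$, from which the bound follows immediately. First I would expand the square of the modulus, writing
\begin{eqnarray*}
|a_1 z_1 + a_2 z_2|^2 = a_1^2 |z_1|^2 + a_2^2 |z_2|^2 + a_1 a_2 (z_1 \overline{z_2} + \overline{z_1} z_2).
\end{eqnarray*}
Since $\ds|z_1| = |z_2| = 1$, the first two terms become $\ds a_1^2 + a_2^2$, and using $\ds a_1 + a_2 = 1$ I would rewrite $\ds a_1^2 + a_2^2 = 1 - 2 a_1 a_2$, while the cross term equals $\ds 2 a_1 a_2 \mbox{Re}(z_1 \overline{z_2})$.

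Next I would observe that $\ds\mbox{Re}(z_1 \overline{z_2})$ is exactly controlled by the separation $\ds|z_1 - z_2|$. Indeed $\ds|z_1 - z_2|^2 = |z_1|^2 + |z_2|^2 - 2\mbox{Re}(z_1 \overline{z_2}) = 2 - 2\mbox{Re}(z_1 \overline{z_2})$, so that $\ds 1 - \mbox{Re}(z_1 \overline{z_2}) = \tfrac{1}{2}|z_1 - z_2|^2$. Substituting into the expansion collapses everything to the clean identity
\begin{eqnarray*}
|a_1 z_1 + a_2 z_2|^2 = 1 - a_1 a_2 |z_1 - z_2|^2.
\end{eqnarray*}

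Finally, on the set $\ds S(\delta,\eta)$ both factors in the subtracted term are bounded below: $\ds a_1 a_2 \geq \delta^2$, since each of $\ds a_1, a_2$ is at least $\ds\delta \geq 0$, and $\ds|z_1 - z_2|^2 \geq \eta^2$ by hypothesis. Hence $\ds|a_1 z_1 + a_2 z_2|^2 \leq 1 - \delta^2 \eta^2$, and I would simply set $\ds\rho = \sqrt{1 - \delta^2 \eta^2}$, which is strictly less than $1$ because $\ds\delta,\eta > 0$. The only point needing a moment's care is confirming $\ds 1 - \delta^2\eta^2 \geq 0$ so that $\ds\rho$ is real and well defined; this holds on any nonempty $\ds S(\delta,\eta)$, where necessarily $\ds\delta \leq 1/2$ and $\ds\eta \leq 2$, giving $\ds\delta^2\eta^2 \leq 1$.

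I do not anticipate a genuine obstacle: the whole statement rests on the single identity above. A purely qualitative alternative would be to note that $\ds(a_1,a_2,z_1,z_2)\mapsto |a_1 z_1 + a_2 z_2|$ is continuous and $\ds S(\delta,\eta)$ is compact, so the supremum is attained at a point with $\ds z_1 \neq z_2$ and is therefore $<1$; but this route yields no explicit value for $\ds\rho$ and is strictly weaker, so I would present the computation.
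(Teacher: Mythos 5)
Your proof is correct, and it takes a genuinely different and sharper route than the paper. The paper argues qualitatively: it identifies the locus in $A$ where $|a_1z_1+a_2z_2|=1$, observes that $S(\delta,\eta)$ is a compact set bounded away from that locus, and invokes continuity of $(a_1,a_2,z_1,z_2)\mapsto a_1z_1+a_2z_2$ to conclude the supremum is $<1$ --- exactly the "qualitative alternative" you mention and discard. Your computation instead yields the exact identity $|a_1z_1+a_2z_2|^2=1-a_1a_2|z_1-z_2|^2$ and hence the explicit value $\rho=\sqrt{1-\delta^2\eta^2}$. This buys two things. First, it makes transparent how $\rho$ depends on $(\delta,\eta)$, which matters downstream: Lemma~\ref{lemrho} and Theorem~\ref{thmchris} need constants $\sigma=\sigma(M,\rho)$ that depend only on the stated parameters, and the compactness argument delivers that dependence only implicitly. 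Second, your identity correctly exhibits the full modulus-one locus as $\{a_1a_2|z_1-z_2|^2=0\}$, i.e.\ $z_1=z_2$ or $a_1a_2=0$; the paper's description of that set as $R=\{a_1=a_2,\ z_1=z_2\}$ is not quite right (though its argument survives because $S(\delta,\eta)$ avoids the true locus as well). Your final remark about $1-\delta^2\eta^2\geq 0$ is also handled correctly, and is in fact automatic from the identity itself since the left-hand side is a square. The only nitpick is the stray "$\delta\geq 0$" where you mean $\delta>0$, which is immaterial.
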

\begin{proof}
By the triangle inequality for points in $A$ $\ds |a_1z_1+a_2z_2|=1$ if and only if $\ds a_1z_1=\lambda a_2z_2$ for $\ds \lambda\geq 0$, which implies that $\ds (a_1,a_2,z_1,z_2)\in A$. Therefore 
$\ds F(a_1,a_2,z_1,z_2)=a_1z_1+a_2z_2$ has modulus $1$ on $A$ only on the set
$\ds R=\{(a_1,a_2,z_1,z_2)\, , \, a_1=a_2\, ,\,z_1=z_2\}$. Observe that the points 
in $\ds S(\delta,\eta)$ are bounded away from $R$. Since $\ds S(\delta,\eta)$
is a compact subset of $A$ and $F$ is continuous on $A$ the claim follows.
\end{proof}

\newtheorem{lemrho}[lems]{Lemma}
\begin{lemrho}
\label{lemrho}
Let $\ds\nu$ be a probability measue on $\mathbb{Z}$ with $\ds m_1(\nu)\leq a$
and \\
$\ds \sup_{\beta,r\in\mathbb{Z}}\nu(\beta\mathbb{Z}+r)\leq \rho<1$. Suppose $\ds\frac{l}{s}$ is a rational number in $\ds (-1/2,1/2]$ with $\ds |s|\leq M$ and $\ds |l|\leq \left\lfloor{\frac{|s|}{2}}\right\rfloor$. Then there exists $\ds 0\leq\sigma=\sigma(a,\rho)<1$ such that $\ds\left|\hat{\nu}\left(\frac{l}{s}\right)\right|\leq\sigma$.
\end{lemrho}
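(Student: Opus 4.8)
The plan is to move to the Fourier side, organize the mass of $\nu$ by residues modulo the denominator, and convert the coset hypothesis into a quantitative spectral gap. We may assume $l\neq 0$ (otherwise $l/s=0$ and $\hat\nu=1$, a value excluded from the conclusion), and, passing to lowest terms (which only shrinks the denominator, still $\le M$), that $\gcd(l,s)=1$ with $s\ge 2$. Then $\omega=e^{2\pi i l/s}$ is a \emph{primitive} $s$-th root of unity. Writing $p_j=\nu(s\mathbb{Z}+j)$ for $0\le j<s$, so that $(p_0,\dots,p_{s-1})$ is a probability vector, I would record
\[
\hat\nu\!\left(\tfrac{l}{s}\right)=\sum_{k\in\mathbb{Z}}\nu(k)\,\omega^{k}=\sum_{j=0}^{s-1}p_j\,\omega^{j},
\]
and note that taking $\beta=s$ in the hypothesis gives $p_j=\nu(s\mathbb{Z}+j)\le\rho$ for every $j$, whence $\sum_j p_j^2\le(\max_j p_j)\sum_j p_j\le\rho$.

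The core estimate I would obtain through the autocorrelation of $\nu$. Setting $R(d)=\sum_{k}\nu(k)\nu(k-d)$ (so $R\ge0$, $R(-d)=R(d)$, and $\sum_d R(d)=1$), expanding the square gives the real identity
\[
1-\Big|\hat\nu\!\left(\tfrac{l}{s}\right)\Big|^2=\sum_{d\in\mathbb{Z}}R(d)\,\bigl(1-\cos(2\pi d l/s)\bigr).
\]
The portion of the total mass carried by $R$ on residues $d\not\equiv0\pmod s$ is exactly $1-\sum_{j}p_j^{2}\ge 1-\rho$. For each such $d$ one has $dl\not\equiv0\pmod s$ (again since $\gcd(l,s)=1$), so, writing $\|x\|$ for the distance from $x$ to the nearest integer, $\|dl/s\|\ge 1/s$ and therefore $1-\cos(2\pi dl/s)\ge 8\|dl/s\|^2\ge 8/s^2$. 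Discarding the nonnegative terms with $d\equiv 0\pmod s$ then yields
\[
1-\Big|\hat\nu\!\left(\tfrac{l}{s}\right)\Big|^2\ \ge\ \frac{8}{s^2}\,(1-\rho)\ \ge\ \frac{8(1-\rho)}{M^2},
\]
so $\bigl|\hat\nu(l/s)\bigr|\le\sigma:=\sqrt{1-8(1-\rho)/M^2}$; since $\rho<1$ this $\sigma$ is strictly below $1$, and the constraint $s\rho\ge\sum_j p_j=1$ keeps the radicand in $[0,1)$.

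The step I expect to be the genuine obstacle is the lower bound on the cosine deficit $1-\cos(2\pi dl/s)$: because $dl/s$ may lie within $1/s$ of an integer, each term contributes only $O(1/s^{2})$, so the gap degrades as the denominator grows. This is why the bound $|s|\le M$ is indispensable and must enter $\sigma$. In fact I expect $\sigma$ cannot depend on $a$ and $\rho$ alone: for $\nu=\tfrac12(\delta_0+\delta_1)$ one has $a=\rho=\tfrac12$ fixed while $\bigl|\hat\nu(1/s)\bigr|=\cos(\pi/s)\to1$ as $s$ grows, forcing $M$ into the constant.

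Finally, the first-moment hypothesis $m_1(\nu)\le a$ is not needed for the gap above, but it is the natural tool if one prefers to confine the estimate to a finite band of $d$: from $\sum_d|d|R(d)=\sum_{k,k'}|k-k'|\nu(k)\nu(k')\le 2m_1(\nu)\le 2a$ and Markov's inequality, a fraction at least $(1-\rho)/2$ of the off-residue autocorrelation mass sits on $0<|d|\le 4a/(1-\rho)$, and running the same cosine estimate only over this band reproduces the conclusion with the dependence on $a$ and $\rho$ made explicit.
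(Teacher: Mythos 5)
Your argument is correct, but it reaches the conclusion by a genuinely different mechanism than the paper. Both proofs begin identically --- pass to lowest terms $\alpha/\beta$ with $\beta\le M$ and group the mass of $\nu$ into the $\beta$ residue classes, so that $\hat\nu(l/s)=\sum_j p_j\omega^j$ with $\omega$ a primitive root of unity --- but from there the paper argues qualitatively: pigeonhole produces two classes each of mass at least some $\delta(M,\rho)>0$ whose attached roots of unity are separated by at least $\eta(M)>0$, and Lemma~\ref{lems} (a compactness argument on the set $S(\delta,\eta)$) forces the modulus of those two terms strictly below their total mass; the triangle inequality on the remaining terms finishes. You replace the compactness step with the autocorrelation identity $1-|\hat\nu(t)|^2=\sum_d R(d)\bigl(1-\cos(2\pi dt)\bigr)$, the computation that $R$ places mass exactly $\sum_j p_j^2\le\rho$ on the progression $d\equiv 0\pmod s$, and the elementary bound $1-\cos(2\pi dl/s)\ge 8/s^2$ off that progression; all three steps check out. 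This buys an explicit constant $\sigma=\sqrt{1-8(1-\rho)/M^2}$ where the paper's $\sigma$ is non-effective, and it makes transparent that the first-moment hypothesis is never used --- which is equally true of the paper's proof, where $m_1(\nu)\le a$ enters only in Theorem~\ref{thmchris} to manufacture $M=M(a)$ via Chebyshev. Your side observation that $\sigma$ cannot depend on $a$ and $\rho$ alone (take $\nu=\tfrac{1}{2}(\delta_0+\delta_1)$, so that $|\hat\nu(1/s)|=\cos(\pi/s)\to 1$) is a legitimate criticism of the statement as printed; the paper's own proof in fact concludes $\sigma=\sigma(M,\rho)$, and the ``$\sigma(a,\rho)$'' of the statement is only meaningful downstream, where $M$ is a function of $a$. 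The implicit assumptions you flag --- $l\ne 0$, and $\beta\ge 2$ in the coset hypothesis --- are exactly the ones the paper also needs and leaves tacit.
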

\begin{proof}
Let $\ds |s|\leq M$. We have for $\ds |l|\leq\left\lfloor\frac{s}{2}\right\rfloor$, 
$\ds \hat{\nu}\left(\frac{l}{s}\right)=\sum_{m\in\mathbb{Z}}\nu(m)e^{2\pi im(l/s)}$. Write $\ds d=\gcd(l,s)$, then $\ds l=d\alpha$, $\ds s=d\beta$, and $\ds m=\gamma\beta+r$ for some $\ds 0\leq r<\beta$. Then $$\hat{\nu}\left(\frac{l}{s}\right)=\sum_{r=0}^{\beta-1}\nu(\beta\mathbb{Z}+r)e^{2\pi ir(\alpha/\beta)}\, .$$ By assumption there exist two cosets $\ds \beta\mathbb{Z}+r_1$, $\ds\beta\mathbb{Z}+r_2$ and a value $\delta$, which depends only on $M$ and $\rho$ such that $\ds \nu(\beta\mathbb{Z}+r_1)$, $\ds\nu(\beta\mathbb{Z}+r_2)\geq\delta$. Therefore,
\begin{eqnarray*}
\hat{\nu}\left(\frac{l}{s}\right) & = &\nu(\beta\mathbb{Z}+r_1)e^{2\pi ir_1(\alpha/\beta)}+\nu(\beta\mathbb{Z}+r_2)e^{2\pi ir_2(\alpha/\beta)} \\
& + & \sum_{m  \notin \beta\mathbb{Z}+r_1\cup\beta\mathbb{Z}+r_2),}\nu(m)e^{2\pi im(\alpha/\beta)}\, .
\end{eqnarray*}
Also since $\ds\gcd(\alpha,\beta)=1$ 
$$ |e^{2\pi ir_1(\alpha/\beta)}-e^{2\pi ir_2(\alpha/\beta)}|=|1-e^{2\pi i(r_2-r_1)(\alpha/\beta)}|\geq \eta>0\, ,$$
where $\eta$ depends on $M$ and $\rho$ since $\ds |\beta|\leq|s|\leq M$.
Therefore by Lemma~\ref{lems} there exists a $\ds 0\leq\sigma'=\sigma'(M,\rho)<1$ such that
$$|\nu(\beta\mathbb{Z}+r_1)e^{2\pi ir_1(\alpha/\beta)}+\nu(\beta\mathbb{Z}+r_2)e^{2\pi 
ir_2(\alpha/\beta)}|\leq\sigma'\left(\nu(\beta\mathbb{Z}+r_1)+
\nu(\beta\mathbb{Z}+r_2)\right)\, .$$
It follows that there exists $\ds 0\leq\sigma=\sigma(M,\rho)<1$ such that $\ds|\hat{\nu}(l/s)|\leq\sigma$.
\end{proof}

\newtheorem{thmchris}[lems]{Theorem}

\begin{thmchris}
\label{thmchris}
Let $\ds\nu$ be a probability measue on $\mathbb{Z}$ with $\ds m_1(\nu)\leq a$
and $\ds \sup_{\beta,r\in\mathbb{Z}}\nu(\beta\mathbb{Z}+r)\leq \rho<1$. Then there exists a $c=c(a,\rho)$ such that $\ds |\hat{\nu}(t)|\leq e^{-ct^2}$. 
\end{thmchris}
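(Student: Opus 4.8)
The plan is to bound $1-|\hat\nu(t)|^2$ below by a constant multiple of $t^2$, uniformly over all $\nu$ satisfying the hypotheses, and then to convert this into the exponential bound via $1-x\le e^{-x}$. The starting point is the autocorrelation identity $1-|\hat\nu(t)|^2=2\sum_{m}\lambda(m)\sin^2(\pi m t)$, where $\lambda=\nu\ast\tilde\nu$ is the symmetric probability measure $\lambda(m)=\sum_k\nu(k)\nu(k-m)$. Two elementary facts drive the argument: first, $\lambda(0)=\sum_k\nu(k)^2\le\rho$, since a singleton is contained in every coset so no atom of $\nu$ exceeds $\rho$; hence the off-diagonal mass satisfies $\sum_{m\neq 0}\lambda(m)\ge 1-\rho$. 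Second, $m_1(\lambda)\le 2m_1(\nu)\le 2a$ by the triangle inequality. Applying Markov's inequality to $\lambda$ on $\{m\neq0\}$ with the cutoff $R:=4a/(1-\rho)$ confines a definite amount of off-diagonal mass to bounded frequencies, namely $\sum_{0<|m|\le R}\lambda(m)\ge(1-\rho)/2$.

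For small $t$ the quadratic bound is immediate: when $|t|\le\delta:=1/(2R)$ every $m$ with $|m|\le R$ has $\pi|mt|\le\pi/2$, so $\sin^2(\pi m t)\ge 4t^2$ and the identity yields $1-|\hat\nu(t)|^2\ge 4(1-\rho)t^2$. The real content is the annulus $\delta\le|t|\le 1/2$, where I must produce a lower bound $1-|\hat\nu(t)|^2\ge\kappa_0>0$ with $\kappa_0$ depending only on $a$ and $\rho$. I split this range by proximity to rationals of bounded denominator. The first moment gives $|\hat\nu'(t)|\le 2\pi a$, so $\hat\nu$ is Lipschitz with constant depending only on $a$. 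Fixing $M=R$, let $\sigma:=\sigma(M,\rho)<1$ be the constant from Lemma~\ref{lemrho} and set $\epsilon:=(1-\sigma)/(4\pi a)$. If $t$ lies within $\epsilon$ of some rational $l/s$ with $s\le M$, then Lemma~\ref{lemrho} gives $|\hat\nu(l/s)|\le\sigma$ and the Lipschitz estimate gives $|\hat\nu(t)|\le(1+\sigma)/2<1$.

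If instead $t$ is $\epsilon$-far from every rational of denominator at most $M$, then for each $m$ with $1\le|m|\le M$ the bound $\|mt\|=|m|\,|t-j/m|\ge|m|\epsilon\ge\epsilon$ holds (with $j$ the nearest integer to $mt$, so $j/m$ has denominator at most $M$), whence $\sin^2(\pi m t)=\sin^2(\pi\|mt\|)\ge\sin^2(\pi\epsilon)$; summing against $\lambda$ over $0<|m|\le R$ gives $1-|\hat\nu(t)|^2\ge(1-\rho)\sin^2(\pi\epsilon)$. Taking $\kappa_0$ to be the smaller of $(1-\rho)\sin^2(\pi\epsilon)$ and $1-\big((1+\sigma)/2\big)^2$ handles the whole annulus. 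Since $t^2\le 1/4$ there, the annulus bound reads $1-|\hat\nu(t)|^2\ge 4\kappa_0 t^2$, and combining with the small-$t$ estimate produces a single constant $c'=c'(a,\rho)$ with $1-|\hat\nu(t)|^2\ge c't^2$ for all $t\in[-1/2,1/2)$; then $|\hat\nu(t)|^2\le 1-c't^2\le e^{-c't^2}$ gives the claim with $c=c'/2$.

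I expect the main obstacle to be the annulus estimate, and specifically keeping it uniform over $\nu$: a crude Dirichlet approximation is useless here because the denominators it produces are unbounded, while the constant $\sigma(M,\rho)$ of Lemma~\ref{lemrho} degenerates to $1$ as $M\to\infty$. The point that makes the argument work is that the first moment already confines the relevant frequencies of $\lambda$ to $|m|\le R=4a/(1-\rho)$, so only the fixed denominator bound $M=R$ is ever needed; this freezes $\sigma$ away from $1$ and lets the same cutoff $R$ serve simultaneously in the Markov step, the small-$t$ step, and the separation-of-roots step.
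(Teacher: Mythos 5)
Your proof is correct and shares the paper's overall architecture --- a quadratic lower bound on $1-|\hat\nu(t)|^2$ near $t=0$, and away from $0$ a dichotomy between points close to rationals of bounded denominator (handled by Lemma~\ref{lemrho} plus the Lipschitz bound $|\hat\nu'|\le 2\pi a$) and points far from all such rationals --- but the two halves are implemented differently. The paper uses Chebyshev's inequality to extract two specific atoms $k,j$ with $\nu(k),\nu(j)\ge\delta$ and $|k|,|j|\le M$, works only with the single denominator $s=k-j$, and handles the ``far'' case by the two-point separation estimate of Lemma~\ref{lems} applied to those atoms; the small-$t$ regime is then delegated to Lemma~\ref{vp}/\ref{vp2}. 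You instead pass to the autocorrelation measure $\lambda=\nu\ast\tilde\nu$ and the identity $1-|\hat\nu(t)|^2=2\sum_m\lambda(m)\sin^2(\pi mt)$, which buys you both regimes at once: the bounds $\lambda(0)\le\rho$ and $m_1(\lambda)\le 2a$ confine a fixed amount of off-diagonal mass to $0<|m|\le R$, giving the quadratic bound for $|t|\le 1/(2R)$ directly (no appeal to Lemma~\ref{vp2}) and the uniform bound in the far case by summing $\sin^2$ over all these frequencies rather than isolating two atoms. Your version is arguably cleaner and makes the uniformity in $\nu$ more transparent. One small point to patch: your dichotomy must exclude the rational $0$, since Lemma~\ref{lemrho} is vacuous there ($\hat\nu(0)=1$); you therefore need $\epsilon<\delta=1/(2R)$ so that a point of the annulus lying within $\epsilon$ of a bounded-denominator rational is within $\epsilon$ of a \emph{nonzero} one. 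Since shrinking $\epsilon$ only strengthens the Lipschitz step, replacing your $\epsilon$ by $\min\bigl((1-\sigma)/(4\pi a),\,1/(4R)\bigr)$ fixes this.
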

\begin{proof}
By hypothesis and using Chebyshev's inequality there exist $\ds\delta=\delta(\rho,a)$, $\ds M=M(a)$, and integers $k$, $j$
such that $\ds |k|$, $|j|\leq M$ and $\ds \nu(k)$, $\ds \nu(j)\geq\delta$. Let
$\ds s=k-j$, and consider the points $\ds \left\{\frac{p}{s}\, :\, p=0,\pm 1,\ldots,\pm \left\lfloor\frac{|s|}{2}\right\rfloor\right\}$. By Lemma~\ref{lemrho} and 
the mean value theorem for $\ds p=\pm 1,\ldots,\pm\frac{|s|}{2}$ there exists an $\ds\epsilon=\epsilon(a)$ such that $\ds \forall t\in\left(\frac{p}{s}-\epsilon,\frac{p}{s}+\epsilon\right)$ we have 
$\ds |\hat{\nu}(t)|\leq\sigma+\frac{1-\sigma}{2}$ where $\sigma$ is the value 
in Lemma~\ref{lemrho} . Let $\ds I_{p}=\left(\frac{p}{s}-\epsilon,\frac{p}{s}+\epsilon\right)$ where
$\ds p=0,\pm 1,\ldots,\pm\left\lfloor\frac{|s|}{2}\right\rfloor$, and $t_0$ a point
in the complement of $\ds S=\cup_{p}I_p$. We have 
\begin{eqnarray*}
\hat{\nu}(t_0) & = &\nu(k)e^{2\pi ikt_0}+\nu(j)e^{2\pi ijt_0}\\
& + & \sum_{m\neq k,j}\nu(m)e^{2\pi imt_0}\, .
\end{eqnarray*}
Now, $\ds |e^{2\pi ikt_0}-e^{2\pi ijt_0}|=|1-e^{2\pi ist_0}|$ and this is greater than a value $\eta>0$, which depends only on $s$ and $\epsilon$
which depends only on $m_1(\nu)$ which is bounded by $a$. Thus by Lemma~\ref{lems}
$$|\nu(k)e^{2\pi ikt_0}+\nu(j)e^{2\pi ijt_0}|\leq\sigma'(\nu(k)+\nu(j))$$
and therefore $\ds |\hat{\nu}(t_0)|\leq\sigma''<1$ for some value $\sigma''=\sigma''(\rho,a)$. We therefore have for $|t|\geq \epsilon$ a value
$\ds \sigma'''=\max(\sigma,\sigma'')<1$ dependent on $\rho$ and $a$ only, such that $\ds|\hat{\nu}(t)|\leq \sigma''$. By Lemma \ref{vp} there exists a $c'$ such that
$\ds |\hat{\nu}(t)|\leq 1-c't^2<1$ for $\ds 0<|t|<\epsilon$. The conclusion follows by choosing a value $c$ small enough so that 
$\ds |\hat{\nu}(t)|\leq e^{-ct^2}$ $\ds\forall t\in(-1/2,1/2]$.
\end{proof}

\noindent Combining Theorems~\ref{almostevery} and \ref{thmchris} we get,

\newtheorem{thmcor}[lems]{Theorem}
\begin{thmcor}
If $\nu_n$ is a sequence of probability measures on $\ds \mathbb{Z}$ such that
for all $n$ 
\begin{enumerate}
\item $\ds E(\nu_n)=0$
\item $\ds m_1(\nu_n)\leq a$
\item $\ds \sup_n\sup_{\alpha,\beta}\nu_n(\beta\mathbb{Z}+\alpha)\leq\rho<1$
\item $\ds \phi(n)=\sum_{i=1}^{n}m_2(\nu_i)=O(n)$
\end{enumerate}
then $\ds\mu_nf(x)$ converges a.e. $\forall f\in\mbox{L}^1(X)$.
\end{thmcor}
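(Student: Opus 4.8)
The plan is to deduce this statement directly from Theorem~\ref{almostevery} by checking that its three hypotheses hold for the sequence $(\nu_n)$. Two of those hypotheses, namely $E(\nu_n)=0$ and $\phi(n)=O(n)$, are literally hypotheses (1) and (4) of the present statement, so they transfer with no work. What remains is to verify that each $\nu_n$ is strictly aperiodic and that there is a single constant $C$ and an index $N_0$ with $|\hat{\nu}_n(t)|\le e^{-Ct^2}$ for all $n>N_0$ and all $t\in[-1/2,1/2)$.

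Strict aperiodicity is immediate from hypothesis (3): since $\sup_{\alpha,\beta}\nu_n(\beta\mathbb{Z}+\alpha)\le\rho<1$, no proper coset of $\mathbb{Z}$ carries the full mass of $\nu_n$, so the support of $\nu_n$ cannot be contained in a proper coset, which is exactly the definition of strict aperiodicity. For the Gaussian bound I would apply Theorem~\ref{thmchris} to each $\nu_n$ separately. Hypotheses (2) and (3) supply precisely the input that theorem requires: $m_1(\nu_n)\le a$ and $\sup_{\beta,r}\nu_n(\beta\mathbb{Z}+r)\le\rho<1$. The theorem then produces a constant $c=c(a,\rho)$ with $|\hat{\nu}_n(t)|\le e^{-ct^2}$ for every $t\in(-1/2,1/2]$.

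The crucial observation, and the real point of combining the two results, is that the constant $c$ furnished by Theorem~\ref{thmchris} depends \emph{only} on $a$ and $\rho$, not on the individual measure $\nu_n$. Because $a$ and $\rho$ are the same for every $n$, the same $c$ works uniformly along the sequence; we may therefore take $C=c$ and $N_0=1$ (indeed any $N_0>0$), which supplies hypothesis (3) of Theorem~\ref{almostevery}. With all three hypotheses in place, Theorem~\ref{almostevery} yields the almost everywhere convergence of $\mu_n f$ for every $f\in\mbox{L}^1(X)$. I expect the only delicate point to be exactly this uniformity check: one must confirm that nothing in the proof of Theorem~\ref{thmchris} introduces a dependence on $\nu_n$ beyond $a$ and $\rho$. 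Tracing through Lemmas~\ref{lems} and \ref{lemrho}, the quantities $\delta$, $\eta$, $\sigma$, and $\epsilon$ are all controlled by $M=M(a)$ and $\rho$ alone, so the uniformity does hold and the argument closes.
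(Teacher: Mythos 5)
Your proposal is correct and matches the paper's own (very terse) argument exactly: the paper derives this theorem by the single remark ``Combining Theorems~\ref{almostevery} and \ref{thmchris} we get,'' which is precisely your route of feeding the uniform Gaussian bound $|\hat{\nu}_n(t)|\le e^{-ct^2}$ with $c=c(a,\rho)$ from Theorem~\ref{thmchris} into Theorem~\ref{almostevery}. Your explicit checks of strict aperiodicity and of the uniformity of the constant in $n$ are exactly the details the paper leaves implicit.
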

\theoremstyle{definition}
\newtheorem{remnew}[lems]{Remark}
\begin{remnew}
Let 
$$\nu_n(k)=\left\{
\begin{array}{ll}
\ds\frac{1-a_n}{2} & k=\pm 1\\
a_n & k=0 \\
0 & \mbox{otherwise}
\end{array}\right.$$
where $1>a_n>0$ and $a_n\rightarrow 0$ fast enough so that $\prod_{n=1}^{\infty} a_n>0$. Then, using an argument similar to that in \cite{belrjblatt} one may show that the sequence $\mu_n f$ does not converge a.e for some $f\in \mbox{L}^{\infty}$. Of course the sequence $\nu_n(k)$ does not satisfy the condition $\ds\sup_n\sup_{\alpha,\beta}\nu_n(\beta\mathbb{Z}+\alpha)\leq\rho$ while it does satisfy the condition $\ds m_1(\nu_n)\leq a$.
\end{remnew}

\section{The Strong Sweeping Out Property}
\theoremstyle{plain}
\subsection{Introduction}

In this section $\ds (X,\mathcal{B},m,\tau)$ and $\ds\tau $ are as previously. Hereby, we discuss cases where the operators $\ds\mu_n f(x)=\sum_{k\in\mathbb{Z}}\mu_n(k)f(\tau^k x)$ fail to
converge, where as before $\ds \mu_n=\nu_1\ast\cdots\ast\nu_n$. The case where $\ds \mu_n$ is given by the convolution powers of a single probability measure $\mu$ on $\ds\mathbb{Z}$
, i.e $\ds \mu_n=\mu^n$, has been studied. In the event of convolution
powers, the probability measure $\ds\mu$ given by $\ds\mu=\frac{1}{2}(\delta_{0}+\delta_{1})$ is the prototype 
of bad behavior for the resulting sequence of operators $\ds (\mu^n f)(x)$. By using the Central Limit Theorem
it was shown in ~\cite{belrjblatt} that the bad behavior of this prototype is typical at least if $\mu$ has
$\ds m_2(\mu)<\infty$ and $\ds E(\mu)\neq 0$(~\cite{belrjblatt}). In ~\cite{loserttwo}, this result was extended to probability measures with $\ds E(\mu)=0$ and $\ds m_{p}(\mu)<\infty$
for $\ds p>1$.


\theoremstyle{Definition}

\newtheorem{stswout}{Definition}[subsection]
\begin{stswout}
The sequence of measures $\ds\mu_n$ is said to have the \textbf{strong sweeping out} property,
if given $\ds\epsilon>0$, there is a set $\ds B\in\mathcal{B}$ with $\ds m(B)<\epsilon$ such that
$$\limsup_{n}\mu_n\chi_{B}(x)=1\:\mbox{a.e.}\:\liminf_{n}\mu_n\chi_{B}(x)=0\:\mbox{a.e.}$$
\end{stswout}



\theoremstyle{plain}
\noindent We will use the following in our constructions.
\newtheorem{jr}[stswout]{Proposition}
\begin{jr}[\cite{joe}]
\label{joeneg}
For any sequence of probability measures $\ds\mu_N$ on $\ds\mathbb{Z}$ that are dissipative, i.e. $\ds\lim_{N\rightarrow\infty}\mu_N(k)=0$
for all $\ds k\in\mathbb{Z}$, if there exists $\ds b>0$ and a dense subset $\ds D\subset \{\gamma : |\gamma|=1\}$ with 
$\ds\liminf_{N\rightarrow\infty}|\hat{\mu}_{N}(\gamma)|\geq b$ for all $\ds\gamma\in D$, then for any ergodic dynamical system 
$\ds(X,\mathcal{B},m,\tau)$ the sequence $\mu_n$ is strong sweeping out.
\end{jr}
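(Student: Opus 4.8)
The plan is to deduce the strong sweeping out property from a single \emph{quantitative} sweeping statement by a Baire category argument, and then to produce that quantitative statement from the Fourier hypothesis together with dissipativity.

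First I would record the standard reduction. Fix $\epsilon>0$ and an ergodic $(X,\mathcal B,m,\tau)$, and work in the complete metric space $\mathcal M_\epsilon=\{B\in\mathcal B:m(B)\le\epsilon\}$ equipped with $d(B,B')=m(B\triangle B')$. Because each $\mu_N$ is an $L^1$-contraction, the maps $B\mapsto\mu_N\chi_B$ are continuous into $L^1$ uniformly in $N$, so for $\eta>0$ the families
\[
U_\eta=\{B:\ m(\{x:\sup_N\mu_N\chi_B(x)>1-\eta\})>1-\eta\},\qquad V_\eta=\{B:\ m(\{x:\inf_N\mu_N\chi_B(x)<\eta\})>1-\eta\}
\]
are open in $\mathcal M_\epsilon$. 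If each is also dense, then $\bigcap_\eta(U_\eta\cap V_\eta)$ is residual, producing a single $B$ with $m(B)\le\epsilon$, $\limsup_N\mu_N\chi_B=1$ and $\liminf_N\mu_N\chi_B=0$ almost everywhere, which is exactly strong sweeping out. Thus everything reduces to showing that any given set can be perturbed, within measure $\epsilon$, into one on which $\sup_N\mu_N\chi_B$ is almost everywhere near $1$ (and, separately, $\inf_N\mu_N\chi_B$ near $0$).

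For this density, the genuine content, I would first pass to a convenient model. Since the relevant measures $m(\{\sup_N\mu_N\chi_B>1-\eta\})$ depend only on the combinatorics of $(\mu_N)$ along $\tau$-orbits, a Rokhlin tower transference lets me build the required sets on any aperiodic ergodic system once I can build them on one sufficiently rich system; I would use an irrational rotation $\tau x=x+\alpha$, where the characters diagonalize the operators via $\mu_N\chi_m=\hat\mu_N(m\alpha)\,\chi_m$. The role of the Fourier hypothesis is that $(\mu_N)$ fails to equidistribute: along the dense set $D$ the eigenvalues keep modulus $\ge b$, so $\mu_N$ does not contract the corresponding oscillation down to the constant $m(B)$. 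Concretely I would take $B$ to be a superlevel set of a short trigonometric polynomial $\sum_m c_m\chi_m$ whose frequencies $m\alpha$ lie in (or near) $D$; the bound $|\hat\mu_N(m\alpha)|\ge b$ guarantees that at a suitable scale $N$ the image $\mu_N\chi_B$ reconstitutes a definite fraction of that polynomial and hence exceeds $1-\eta$ on a set of measure $>1-\eta$, all while $m(B)<\epsilon$. The complementary statement, that $\inf_N\mu_N\chi_B<\eta$ on most of the space, is where dissipativity enters: since $\mu_N(k)\to 0$ for every $k$, for large $N$ the mass of $\mu_N$ spreads out along the orbit, so $\mu_N\chi_B(x)$ is, for most $x$, an average of $\chi_B$ over a widely spread sample and therefore close to $m(B)<\eta$.

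I expect the main obstacle to be precisely this quantitative construction: extracting, from a mere lower bound $\liminf_N|\hat\mu_N(\gamma)|\ge b$ on a possibly Lebesgue-null dense set $D$, an honest small set $B$ on which $\sup_N\mu_N\chi_B$ is almost everywhere near $1$. Because $D$ may carry no spectral mass, the argument cannot be run purely through the spectral measure of a fixed system; it must be carried out combinatorially on $\mathbb Z$ (or on long Rokhlin towers), choosing the frequencies, the phases, and the scale $N$ simultaneously so that the persistent Fourier mass adds up constructively. Reconciling this constructive concentration (the sweep up to $1$) with the dissipative spreading (the sweep down to $0$) on essentially the same small set, and transferring the construction uniformly to every ergodic system, is the delicate heart of the proof.
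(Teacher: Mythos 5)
First, a point of orientation: the paper offers no proof of this proposition at all --- it is imported as a known result from \cite{joe}, so there is no internal argument to compare yours against. Judged on its own terms, your proposal is a reasonable sketch of the general strategy used in the literature (a Baire category argument in the space of sets of measure at most $\epsilon$, reduction to a density statement for the open sets $U_\eta$ and $V_\eta$, Rokhlin-tower transference to a rotation model), but it stops exactly where the proof begins. The entire content of the proposition is the quantitative construction you defer to your final paragraph: producing, from the bare hypothesis $\liminf_N|\hat{\mu}_N(\gamma)|\geq b$ on a dense --- possibly Lebesgue-null, possibly spectrally null --- set $D$, together with dissipativity, a set $B$ with $m(B)<\epsilon$ on which $\sup_N\mu_N\chi_B$ is near $1$ on most of $X$. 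Announcing that this is ``the delicate heart of the proof'' and enumerating its difficulties does not discharge it; as written the proposal establishes only the soft reductions, so there is a genuine gap.

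Two further issues. The openness of $U_\eta$ needs a word: a supremum of continuous maps is only lower semicontinuous, so you should first replace $\sup_N$ by a finite maximum $\max_{N\leq N_0}$ (possible because the defining inequalities are strict) before perturbing $B$. More seriously, you have dissipativity doing the wrong job. The $\liminf_N\mu_N\chi_B=0$ half is essentially free for small sets: since $\tau$ preserves $m$, $\int\mu_N\chi_B\,\mbox{d}m=m(B)<\epsilon$, so Fatou's lemma gives $\liminf_N\mu_N\chi_B<\sqrt{\epsilon}$ off a set of measure at most $\sqrt{\epsilon}$, with no appeal to dissipativity; and dissipativity alone certainly does not imply $\mu_N\chi_B(x)\approx m(B)$ for most $x$ (that would be a mean ergodic theorem, requiring spectral decay rather than just $\mu_N(k)\rightarrow 0$ pointwise). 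Dissipativity is in fact needed for the $\limsup=1$ half: without it the Fourier hypothesis is satisfied by $\mu_N=\delta_0$, for which $\mu_N\chi_B=\chi_B$ and strong sweeping out fails. Any correct completion of your outline must therefore feed the spreading of the supports of the $\mu_N$ into the construction of $B$ itself, not into the descent to $0$.
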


\subsection{Strong sweeping out with convolution measures}

\newtheorem{chrislemma2}{Theorem}[subsection]
\begin{chrislemma2}
\label{chrislemma2}
If $\ds \nu_n=a_n\delta_{x_n}+(1-a_n)\gamma_n$, where $\ds \gamma_n$ is a probability measure,
$\ds\sum x_n$ either $\ds\rightarrow\infty$ or $\ds\rightarrow -\infty$ and $\ds\sum_{n}(1-a_n)<\infty$, then
$\ds \{\mu_n=\nu_1\ast\cdots\ast\nu_n\}$ is a dissipative sequence.
\end{chrislemma2}

\begin{proof}Without loss of generality, assume that $\ds\sum x_n\rightarrow \infty$.
Suppose $\ds\nu_n=a_n\delta_{x_n}+(1-a_n)\gamma_n$ as above, then we have $\ds \sum P(Z_n\neq x_n)\leq\sum(1-a_n)<\infty$,
where $\ds Z_n$ is a sequence of independent random variables having distribution $\ds \nu_n$. By the Borel-Cantelli Lemma
$\ds P(Z_n\neq x_n\:\mbox{i.o})=0$. Let $\ds\omega\in(Z_n\neq x_n\:\mbox{i.o})^{c}$. Then
\begin{eqnarray*}
S_N(\omega)=\sum_{m=1}^{N}Z_m(\omega) & = & \sum_{Z_m(\omega)\neq x_m}z_n +\sum_{Z_m(\omega)=x_m}x_m\\
& \geq & -c(\omega) + \sum_{Z_m(\omega)=x_m}\!x_m\rightarrow\infty\:\mbox{as}\: N\rightarrow\infty
\end{eqnarray*} 
as $\ds c(\omega)$ is a constant depending on $\omega$. Hence $\ds S_{N}(\omega)\rightarrow\infty$ with probability $1$.
Therefore, when $k$ is fixed, $\ds P(S_N=k)\rightarrow 0$. Indeed, since $\ds P(\bigcup_{N=1}^{\infty}(S_m(\omega)>k\:\forall m
\geq N))=1$ and the sequence of sets is increasing, we have $\ds P(S_m(\omega)>k\:\forall m\geq N)\rightarrow 1$. But 
$\ds P(S_{N}>k)\geq P(S_m>k\:\forall m\geq N)$ so $\ds P(S_N(\omega)=k)\leq 1-P(S_N(\omega)>k)\rightarrow 0$.
Hence, $\ds \lim_{n\rightarrow\infty}\mu_n(k)=\lim_{n\rightarrow\infty}(\nu_1\ast\cdots\ast\nu_n)(k)=0$ and $\ds \{\mu_n\}$ is a dissipative sequence.
\end{proof}

\newtheorem{gammacor}[chrislemma2]{Corollary}
\begin{gammacor}
\label{annneg}
Let $\ds \nu_n=a_n\delta_{x_n}+(1-a_n)\gamma_n$, where $\ds \gamma_n$ is a probability measure on $\ds\mathbb{Z}$, such that
$\ds x_n\in\mathbb{Z}$, $\ds\sum(1-a_n)<\infty$, $\ds |x_n|\geq 1$ and $\ds\sum x_n\rightarrow\infty\:\mbox{or}\: -\infty$, 
then for any ergodic dynamical system $\ds(X,\mathcal{B},m,\tau)$ 
then the sequence $\mu_n=\nu_1\ast\nu_2\ast\cdots\ast\nu_n$ is strong sweeping out.
\end{gammacor}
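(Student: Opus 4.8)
The plan is to verify the two hypotheses of Proposition~\ref{joeneg} and then apply it to the given ergodic system. Dissipativity requires no new work: the conditions on $\nu_n=a_n\delta_{x_n}+(1-a_n)\gamma_n$, namely $\sum_n(1-a_n)<\infty$ and $\sum x_n\to\pm\infty$, are precisely the hypotheses of Theorem~\ref{chrislemma2}, so $\mu_n(k)\to 0$ for every $k\in\mathbb{Z}$. Everything therefore reduces to producing a constant $b>0$ and a dense set $D\subset\{\gamma:|\gamma|=1\}$ with $\liminf_{N\to\infty}|\hat{\mu}_N(\gamma)|\geq b$ on $D$.

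First I would pass to the infinite product. Since $\hat{\mu}_N=\prod_{i=1}^N\hat{\nu}_i$ and each $|\hat{\nu}_i(\gamma)|\leq 1$, the sequence $|\hat{\mu}_N(\gamma)|$ is non-increasing in $N$, so
\begin{eqnarray*}
\liminf_{N\to\infty}|\hat{\mu}_N(\gamma)| & = & \prod_{i=1}^{\infty}|\hat{\nu}_i(\gamma)| .
\end{eqnarray*}
Writing $\hat{\nu}_i(\gamma)=a_i\gamma^{x_i}+(1-a_i)\widehat{\gamma_i}(\gamma)$ and using $|\widehat{\gamma_i}(\gamma)|\leq 1$ yields the factorwise bound $|\hat{\nu}_i(\gamma)|\geq a_i-(1-a_i)=1-2(1-a_i)$. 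Because $\sum_i(1-a_i)<\infty$ we have $a_i\to 1$, so there is $i_0$ with $a_i>1/2$ for all $i\geq i_0$; for these indices the factors are positive and
\begin{eqnarray*}
\prod_{i\geq i_0}|\hat{\nu}_i(\gamma)| & \geq & \prod_{i\geq i_0}\bigl(1-2(1-a_i)\bigr)=:b_0>0 ,
\end{eqnarray*}
a bound that is \emph{uniform} in $\gamma$. In particular the tail sequence $\mu_n'=\nu_{i_0}\ast\cdots\ast\nu_n$, which is itself dissipative by Theorem~\ref{chrislemma2} (shifting the indices changes $\sum x_i$ only by a constant), satisfies $\liminf_N|\hat{\mu}_N'(\gamma)|\geq b_0$ for every character, so Proposition~\ref{joeneg} already gives the strong sweeping out property for $\{\mu_n'\}$ with $D$ the whole circle.

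The remaining step is the finite head $\prod_{i<i_0}|\hat{\nu}_i(\gamma)|$, and this is where I expect the main obstacle. The estimate above is vacuous exactly for the finitely many indices with $a_i\leq 1/2$, and indeed $\hat{\nu}_i(\gamma)=0$ can occur only when $a_i\leq 1-a_i$; at such characters the whole product vanishes, so they must be excluded from $D$. The plan is to take $D=\{|\gamma|=1\}\setminus\bigcup_{i<i_0}\{\hat{\nu}_i=0\}$ and to argue that each of these finitely many zero sets is nowhere dense, which is clear when the $\gamma_i$ are finitely supported, since then $\hat{\nu}_i$ is a nonzero trigonometric polynomial with only finitely many zeros on the circle, and in the general case requires ruling out a whole arc of zeros of $\hat{\nu}_i$. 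The genuinely delicate point one must confront is that $\prod_i|\hat{\nu}_i|$ is continuous and vanishes on these sets, so a single uniform $b$ cannot be harvested from neighborhoods shrinking to a zero; one therefore has either to strengthen the choice of $D$ so that the head stays bounded below, or to justify transferring the property from $\mu_n'$ to $\mu_n=(\nu_1\ast\cdots\ast\nu_{i_0-1})\ast\mu_n'$, i.e. to show that convolving with the fixed initial measure does not destroy strong sweeping out. Once this point is settled, Proposition~\ref{joeneg} applied to $(X,\mathcal{B},m,\tau)$ shows that $\mu_n$ is strong sweeping out.
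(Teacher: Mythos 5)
Your proposal follows the paper's proof in every essential respect: dissipativity is quoted from Theorem~\ref{chrislemma2}, each factor $|\hat{\nu}_l(t)|$ is bounded below using $|\hat{\gamma}_l(t)|\leq 1$, the convergence of $\prod a_l$ (from $\sum(1-a_l)<\infty$) gives a uniform positive lower bound on the tail of the product, and Proposition~\ref{joeneg} is invoked. The step you leave open --- the finitely many initial indices with $a_l\leq 1/2$ --- is exactly the step the paper glosses over. The paper estimates \emph{every} factor by
$$|\hat{\nu}_l(t)|\;\geq\;\bigl|\,a_l-(1-a_l)|\hat{\gamma}_l(t)|\,\bigr|\;\geq\;\bigl|\,a_l-(1-a_l)\,\bigr|=2a_l-1,$$
and then writes $\prod_l a_l(2-\tfrac{1}{a_l})\geq c\prod_l a_l>0$. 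The second inequality above is invalid when $a_l<1/2$ (the left-hand side can be $0$ while $|a_l-(1-a_l)|=1-2a_l>0$), and a constant $c>0$ with $2-\tfrac{1}{a_l}\geq c$ exists only if $\inf_l a_l>1/2$. So the paper is implicitly assuming $a_l$ bounded away from $1/2$ for all $l$; under that assumption its bound is uniform in $t$, the dense set $D$ can be taken to be the entire circle, and your tail argument already gives the same conclusion with $i_0=1$.

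Your concern is not a technicality you failed to dispatch: it is a genuine gap in the statement's proof as given. If some $\hat{\nu}_l$ vanishes at a point $\gamma_0$ of the circle --- which can happen under the stated hypotheses, e.g. $\nu_1=\tfrac{1}{2}\delta_1+\tfrac{1}{2}\delta_{-1}$ has $\hat{\nu}_1(1/4)=0$ --- then $\liminf_N|\hat{\mu}_N(\gamma)|\leq|\hat{\nu}_l(\gamma)|$ is arbitrarily small on every neighborhood of $\gamma_0$, so \emph{no} dense $D$ with a uniform $b>0$ can exist and Proposition~\ref{joeneg} is simply inapplicable; neither excluding zero sets from $D$ nor any choice of $b$ can repair this. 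One would instead have to prove that convolving the strong sweeping out sequence $\mu_n'$ with the fixed head measure preserves the property, which is not automatic and is not done in the paper either. In short: same route as the paper; the proposal is incomplete where you say it is, and the paper completes that step only by an inequality that silently requires $a_l>1/2$ for every $l$.
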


\begin{proof}
Theorem~\ref{chrislemma2} implies that the sequence $\ds\mu_n=\nu_1\ast\cdots\ast\nu_n$ is dissipative.
Note that for $\ds t\in[-1/2,1/2)$ we have 
\begin{eqnarray*}
|\hat{\mu_n}(t)| & = & \prod_{l=1}^{n}|\hat{\nu}_l(t)|\\
                 & = & \prod_{l=1}^{n}|a_l e^{-2\pi ix_l t}+(1-a_l)\hat{\gamma}_l(t)|\\
                 & \geq & \prod_{l=1}^{n}\left||a_l|-(1-a_l)|\hat{\gamma_l}(t)|\right|\\
                 & \geq & \prod_{l=1}^{n}\left||a_l|-(1-a_l)\right|=\prod_{l=1}^{n}(2a_l-1)\\
                 & = & \prod_{l=1}^n a_l(2-\frac{1}{a_l})\geq c\prod_{l=1}^N a_l\geq cc'>0\: .
\end{eqnarray*}
The result follows by Proposition~\ref{joeneg}. Note that we have used the fact that for $\ds a_l>0$
such that $\ds \sum(1-a_l)<\infty$ implies that $\ds\prod a_l$ converges to a nonzero value.
\end{proof}

\theoremstyle{plain}
\newtheorem{chrislemma}[chrislemma2]{Lemma}
\begin{chrislemma}
Let $\ds \nu_n=a_n\delta_{x_n}+(1-a_n)\gamma_n$, where $\ds \gamma_n$ is a probability measure,
$\ds E(\nu_n)=0$, $\ds |x_n|\geq c$ and $\ds a_n\geq d$ for some constants $c$ and $d$, then $\ds m_2(\nu_n)\geq\frac{\alpha}{1-a_n}$ where
$\alpha=dc^2$ .
\end{chrislemma}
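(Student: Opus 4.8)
The plan is to exploit the mixture structure of $\ds\nu_n$ by splitting both the expectation and the second moment along the two pieces $\ds a_n\delta_{x_n}$ and $\ds(1-a_n)\gamma_n$, and then to bound the unknown contribution from $\ds\gamma_n$ from below by the elementary variance inequality $\ds m_2(\gamma_n)\geq\bigl(E(\gamma_n)\bigr)^2$. Throughout I work with $\ds a_n<1$, which is forced by the hypotheses: if $\ds a_n=1$ then $\ds\nu_n=\delta_{x_n}$ has finite second moment $\ds x_n^2$, so the asserted bound (whose right-hand side is infinite) only has content when $\ds a_n<1$.

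First I would record the two linear decompositions
$$E(\nu_n)=a_n x_n+(1-a_n)E(\gamma_n),\qquad m_2(\nu_n)=a_n x_n^2+(1-a_n)m_2(\gamma_n).$$
Since $\ds E(\nu_n)=0$ by hypothesis, the first identity pins down the mean of $\ds\gamma_n$ completely, giving $\ds E(\gamma_n)=-\frac{a_n x_n}{1-a_n}$. This is the crucial observation: the constraint $\ds E(\nu_n)=0$ forces the mean of $\ds\gamma_n$ to be large, of order $\ds a_n x_n/(1-a_n)$, precisely when $\ds a_n$ is close to $1$, and it is this that will generate the blow-up factor $\ds 1/(1-a_n)$ in the conclusion.

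The key step is then to apply $\ds m_2(\gamma_n)\geq\bigl(E(\gamma_n)\bigr)^2$, which is merely the nonnegativity of the variance of $\ds\gamma_n$ (equivalently Jensen's inequality for the convex map $\ds x\mapsto x^2$). Substituting the computed value of $\ds E(\gamma_n)$ yields $\ds m_2(\gamma_n)\geq\frac{a_n^2 x_n^2}{(1-a_n)^2}$, and hence
$$m_2(\nu_n)\geq a_n x_n^2+(1-a_n)\cdot\frac{a_n^2 x_n^2}{(1-a_n)^2}=a_n x_n^2\left(1+\frac{a_n}{1-a_n}\right)=\frac{a_n x_n^2}{1-a_n}.$$
Finally, inserting the hypotheses $\ds a_n\geq d$ and $\ds x_n^2\geq c^2$ gives $\ds m_2(\nu_n)\geq\frac{dc^2}{1-a_n}=\frac{\alpha}{1-a_n}$, as claimed.

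The computation is entirely routine once the variance inequality is in place, so I do not anticipate any genuine obstacle; the only subtlety worth flagging is the tidy algebraic collapse $\ds 1+\frac{a_n}{1-a_n}=\frac{1}{1-a_n}$, which is exactly what turns the two separate contributions into the single factor $\ds 1/(1-a_n)$.
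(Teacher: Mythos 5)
Your proposal is correct and follows essentially the same route as the paper: both solve $E(\nu_n)=0$ for $E(\gamma_n)=-\frac{a_n x_n}{1-a_n}$, apply $m_2(\gamma_n)\geq |E(\gamma_n)|^2$, and collapse $a_n x_n^2+\frac{a_n^2x_n^2}{1-a_n}$ to $\frac{a_n x_n^2}{1-a_n}\geq\frac{dc^2}{1-a_n}$. Your explicit remark that the bound only has content when $a_n<1$ is a small clarification the paper leaves implicit.
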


\begin{proof}
Since $\ds E(\nu_n)=a_n x_n+(1-a_n)E(\gamma_n)=0$, $\ds \frac{a_n x_n}{a_n-1}=E(\gamma_n)$. Therefore 
\begin{eqnarray*}
 m_2(\nu_n)=a_n x_n^2+(1-a_n)m_2(\gamma_n) & \geq & a_n x_n^2+|E(\gamma_n)|^2(1-a_n) \\
 & = & a_n x_n^2+\frac{a_n^2 x_n^2}{1-a_n}\\
 & \geq &  \frac{\alpha}{1-a_n}
\end{eqnarray*}
This provides a lower bound on the second moment, i.e $\ds m_2(\nu_n)\geq \frac{\alpha}{1-a_n}$.
If in addition $\ds \sum(1-a_n)<\infty$, once we allow $\ds\prod a_n\geq c>0$, the second moments $\ds m_2(\nu_n)$
can not grow arbitrarily slowly. 
\end{proof}

\theoremstyle{definition}

\newtheorem*{egneg}{Example}

\begin{egneg}
Let $\ds a_n$ be a sequence such that $\ds\sum(1-a_n)<\infty$. Let $\ds b_n=\left[\frac{1}{1-a_n}\right]$, where $\ds [x]$ denotes the integer part of the number $x$.
Consider the measures given by\\
$\ds \nu_n(k)=\left\{
\begin{array}{lcl}
\ds\frac{1+2b_n}{3+2b_n} & , & k=1 \\ \vspace{.002in} 
\ds\frac{1}{3+2b_n}& , & k=-b_n\\ \vspace{.002in}
\ds\frac{1}{3+2b_n} & , & k=-b_n-1
\end{array}\right.$ \\ \vspace{.001in} \\
These measures satisfy the assumptions of
Theorem~\ref{annneg}. As such, the sequence $\ds\mu_n f=(\nu_1\ast\cdots\ast\nu_n)f$ is strong sweeping out.
It is noteworthy that all the measures in this example satisfy additionally
the following property
\begin{eqnarray*} 
m_2(\nu_n) & = & \frac{2b_n^2+4b_n+2}{3+2b_n},
\end{eqnarray*}
which implies that the second moment grows like $\ds \frac{1}{1-a_n}$.
\noindent One might think of this sequence $\ds \nu_n$ as 
$$ \nu_n=a_n\delta_{1}+\frac{(1-a_n)}{2}(\delta_{-b_n}+\delta_{-b_n-1})=a_n\delta_{1}+(1-a_n)\gamma_n$$
where $\ds \gamma_n=1/2(\delta_{-b_n}+\delta_{-b_n-1})$.
\end{egneg}

\end{document}